\newtheorem{thm}{Theorem}[section] 
\newtheorem{prop}{Proposition}[section] 
\newtheorem{cor}{Corollary}[section]
\newtheorem{lemma}{Lemma}[section]
\theoremstyle{remark}
\newtheorem{rem}{Remark}[section]      
\newtheorem{ex}{Example}[section] 
\newcommand{\mr}{{\mathbb R}}
\newcommand{\mn}{{\mathbb N}}
\newcommand{\mc}{{\mathbb C}}
\newcommand{\mz}{{\mathbb Z}}
\newcommand{\eps}{\varepsilon}
\newcommand{\hil}{\mathcal{H}}
\newcommand{\dist}{\operatorname{dist}}
\newcommand{\num}{\operatorname{Num}}
\newcommand{\bdd}{\mathcal{B}}
\newcommand{\cld}{\mathcal{C}}
\newcommand{\dom}{\operatorname{Dom}}
\renewcommand{\Im}{\operatorname{Im}}
\renewcommand{\Re}{\operatorname{Re}}
\begin{document}

\title[An eigenvalue estimate and its applications]{An eigenvalue estimate 
and its application  to non-selfadjoint Jacobi and Schr\"odinger operators.}
  
\begin{abstract}
For bounded linear operators $A,B$ on a Hilbert space $\hil$ we show the validity of the estimate
$$ \sum_{\lambda \in \sigma_d (B)} \dist(\lambda, \overline{\num}(A))^p \leq \| B-A \|_{\mathcal{S}_p}^p, \quad p \geq 1,$$
and apply it to recover and improve some Lieb-Thirring type inequalities for non-selfadjoint Jacobi and Schr\"odinger operators.
\end{abstract}

\author{Marcel Hansmann}
\address{Institute of Mathematics, Clausthal University of Technology, 38678 Clausthal-Zellerfeld, Germany}
\email{hansmann@math.tu-clausthal.de}

\subjclass[2010]{47A75, 47A12, 47B10, 35J10, 47B36} 
\keywords{eigenvalue estimates, numerical range, Lieb-Thirring inequalities, Schr\"odinger operators, complex-valued potentials, Jacobi operators}

\maketitle

\section{Introduction} 
  
A classical result of Weyl asserts that the essential spectrum of a bounded linear operator $A$ on some complex separable Hilbert space $\hil$ is invariant under compact perturbations, that is, the essential spectra of $A$ and $B$ coincide if $B-A$ is a compact operator on $\hil$. 
In particular, the discrete eigenvalues of $A$ and $B$ (the isolated eigenvalues of finite algebraic multiplicity), if infinitely many, can accumulate at the joint essential spectrum only. In this paper, we would like to obtain more information on these discrete eigenvalues, and on their rate of accumulation to the essential spectrum, given the stronger assumption that $B-A$ is an element of some von Neumann-Schatten ideal $\mathcal{S}_p$. We recall that a compact operator $K$ on $\hil$ is in $\mathcal{S}_p$, where $p>0$, if 
 $\|K\|_{\mathcal{S}_p}:= \left(\sum_{n} s_n(K)^p\right)^{1/p}$ is finite. Here $s_1(K), s_2(K), \ldots$ are the non-zero  eigenvalues of $|K|:=(K^*K)^{1/2}$ in non-increasing order and counted according to multiplicity. 

To explain what we are aiming for, let us begin by recalling the following result of Kato.
\begin{thm}[Kato, \cite{Kato87}]\label{thm:1}
Let $p\geq 1$ and let $A,B$ be bounded selfadjoint operators on $\hil$ such that $B-A \in \mathcal{S}_p$. Then there exist extended enumerations $\{\alpha_j\}, \{\beta_j\}$ of the discrete eigenvalues of $A,B$, respectively, such that
\begin{equation}\label{eq:1}
  \sum_{j} |\alpha_j-\beta_j|^p \leq \|B-A\|_{\mathcal{S}_p}^p.
\end{equation}  
\end{thm}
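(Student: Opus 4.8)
The plan is to pin down the geometry of the common spectrum, reduce to a finite-rank perturbation, and then extract the estimate from a classical $\ell^p$ eigenvalue--perturbation inequality. By Weyl's theorem $\sigma_{ess}(A)=\sigma_{ess}(B)=:\Sigma$, a compact subset of $\mathbb{R}$, and $\mathbb{R}\setminus\Sigma$ is a disjoint union of countably many open intervals, the ``gaps'' of $\Sigma$ (two of them unbounded). Every discrete eigenvalue of $A$ and of $B$ lies in one of these gaps, and in a fixed gap $(a,b)$ the discrete eigenvalues of $A$ (and of $B$), counted with multiplicity, form a sequence that can accumulate only at the endpoints $a,b\in\Sigma$. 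This is precisely why one is forced to allow \emph{extended} enumerations: when the perturbation drives a discrete eigenvalue into $\Sigma$, or creates one out of $\Sigma$, its natural partner in the enumeration is the relevant gap endpoint, which is a point of $\Sigma$; padding the enumerations with such points is what makes an index-by-index comparison possible.

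Next I would reduce to the case $B=A+V$ with $V$ self-adjoint of finite rank. Writing $B-A=\sum_k\gamma_k\langle\cdot,f_k\rangle f_k$ with $\{f_k\}$ orthonormal and $\sum_k|\gamma_k|^p<\infty$, set $V_n=\sum_{k\le n}\gamma_k\langle\cdot,f_k\rangle f_k$ and $B_n=A+V_n$; then $\|V_n\|_{\mathcal{S}_p}\le\|B-A\|_{\mathcal{S}_p}$, $V_n\to B-A$ in $\mathcal{S}_p$, and $B_n\to B$ in operator norm, so the discrete spectrum of $B_n$ converges (with total algebraic multiplicity) to that of $B$. Granting the theorem for each pair $(A,B_n)$ yields extended enumerations and a bijective pairing with cost $\le\|V_n\|_{\mathcal{S}_p}^p$, and a diagonal/compactness argument on these pairings---only finitely many eigenvalues lie in any compact subset of $\mathbb{R}\setminus\Sigma$---produces limiting extended enumerations for $(A,B)$ of cost $\le\liminf_n\|V_n\|_{\mathcal{S}_p}^p\le\|B-A\|_{\mathcal{S}_p}^p$.

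For $B=A+V$ with $\operatorname{rank}V=r$, the eigenvalue counting functions of $A$ and $B$ on each gap differ by at most $r$, and the min--max principle yields a precise interlacing of the two eigenvalue sequences in each gap relative to the eigenvalues of $V$. The quantitative heart is a Lidskii--Mirsky type inequality: after flattening a one-sided piece of a gap by a monotone $1$-Lipschitz function one is reduced to comparing \emph{compact} self-adjoint operators whose ordered eigenvalue sequences list exactly the relevant eigenvalues of $A$ and of $B$, and one appeals to $\|\lambda^{\downarrow}(X)-\lambda^{\downarrow}(Y)\|_{\ell^p}\le\|X-Y\|_{\mathcal{S}_p}$ for $p\ge1$, together with the contraction property $\sum_n|\langle Ve_n,e_n\rangle|^p\le\|V\|_{\mathcal{S}_p}^p$ valid for orthonormal systems $\{e_n\}$. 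For $p=1$ the same conclusion reads off transparently from Krein's spectral shift function $\xi$, via $\|\xi\|_{L^1}\le\|V\|_{\mathcal{S}_1}$ and the identification of $\sum_j|\alpha_j-\beta_j|$ with the integral of $|\xi|$ over the union of the gaps for a suitable pairing; and $p=\infty$ is the classical Weyl bound $\sup_j|\alpha_j-\beta_j|\le\|V\|$.

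The step I expect to be the real obstacle is producing the \emph{single, sharp} right-hand side $\|B-A\|_{\mathcal{S}_p}^p$ rather than a sum of contributions. For $p>1$ the Schatten norm is not additive over the positive and negative parts of $V$, nor over the separate gaps, so one cannot bound each gap by the full norm and add; moreover the flattening that isolates one endpoint of a bounded gap, and the ``phantom'' contributions from eigenvalues that enter or leave $\Sigma$, must all be accounted for at the same time. This forces a global argument---packaging all gaps and all phantom terms into one application of the Lidskii--Mirsky inequality (or, when $p=1$, into the single integral of $|\xi|$)---and it is here, rather than in the analysis of any individual gap, that the proof needs genuine care.
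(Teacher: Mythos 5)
There is an important point of comparison to make first: the paper does not prove Theorem~\ref{thm:1} at all --- it is quoted from \cite{Kato87} as a known result --- so your outline can only be judged on whether it stands as a self-contained proof, and it does not. The step you yourself flag as ``the real obstacle'' is not a technical loose end but the entire content of the theorem: obtaining, \emph{simultaneously} for all gaps of $\sigma_{ess}$, for both signs of the perturbation, and for the ``phantom'' pairings with gap endpoints, the single right-hand side $\|B-A\|_{\mathcal{S}_p}^p$ with constant one. Everything you do establish (Weyl's theorem, the gap structure, interlacing and counting-function bounds for finite rank, an approximation scheme reducing to finite rank) is standard preparatory material; the quantitative heart is announced but not carried out, so nothing beyond the set-up has actually been proved.

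Moreover, the one concrete mechanism you propose for that quantitative step fails as stated. The flattening of a gap by a monotone $1$-Lipschitz function $f$ requires the bound $\|f(B)-f(A)\|_{\mathcal{S}_p}\leq \|B-A\|_{\mathcal{S}_p}$; but Lipschitz functions are not operator Lipschitz in the trace norm at all (so the $p=1$ case of this step is simply false), and for $1<p<\infty$ they are operator Lipschitz on $\mathcal{S}_p$ only with a $p$-dependent constant exceeding one --- constant one is available only for $p=2$. Since the whole point of (\ref{eq:1}) is the constant one, this step gives away exactly what is to be proved. In addition, the inequality $\|\lambda^{\downarrow}(X)-\lambda^{\downarrow}(Y)\|_{\ell^p}\leq\|X-Y\|_{\mathcal{S}_p}$ for \emph{compact} selfadjoint operators on an infinite-dimensional space is essentially Kato's theorem in the compact case: the correct simultaneous ordering of positive and negative eigenvalues and the padding with the accumulation point $0$ is precisely the ``extended enumeration'' issue, so invoking it is close to circular unless you prove it independently (the finite-dimensional Lidskii--Mirsky inequality does not transfer by a soft limiting argument, for the same reason your diagonal argument in the finite-rank reduction needs the cost bound to control where eigenvalues can move). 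The finite-rank approximation scheme and the $p=1$ spectral-shift argument are plausible in themselves, but they do not close either of these gaps.
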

Here  an extended enumeration of the discrete eigenvalues is a sequence which contains all the discrete eigenvalues, an eigenvalue of algebraic multiplicity $m$ appearing exactly $m$-times, and which in addition \textit{may} contain boundary points of the essential spectrum. 

It is the aim of this paper to provide a ``weaker'' variant of Kato's theorem, which is valid given the mere assumption that $B-A \in \mathcal{S}_p$ and which, in particular, does not require any additional assumptions (like selfadjointness) on the bounded operators $A$ and $B$. Moreover, we will indicate the usefulness of this variant by applying it to recover and improve some recently established Lieb-Thirring type inequalities for non-selfadjoint Jacobi and Schr\"odinger operators. 

As stated, Kato's theorem need certainly not be true for general $A$ and $B$ (see Example \ref{ex:1} below). However, it is known to remain correct if $A,B$ and $B-A$ are normal, as has been shown by Bhatia and Davis \cite{Bhatia99}, or if $A$ and $B$ are unitary, provided a multiplicative constant $\pi/2$ is added to the right-hand side of (\ref{eq:1}), see Bhatia and Sinha \cite{MR948353}. Some additional known results in the finite-dimensional case can be found in the monographs  \cite{MR1477662} and \cite{MR2325304}.

Since in applications we are usually interested in the discrete eigenvalues of the perturbed operator $B$ only,
as a first candidate for a weaker analog of (\ref{eq:1}) let us consider the estimate 
\begin{equation}
    \label{eq:2}
    \sum_{\lambda \in \sigma_d(B)} \dist(\lambda,\sigma(A))^p \leq \|B-A\|_{\mathcal{S}_p}^p.
\end{equation}
Here $\sigma(A)$ denotes the spectrum of $A$ and  we are summing over all discrete eigenvalues of $B$, where each eigenvalue is counted according to its algebraic multiplicity. This estimate is certainly weaker than estimate (\ref{eq:1}) since for all extended enumerations $\{\alpha_j\}, \{\beta_j\}$ of the discrete eigenvalues of $A$ and $B$ the sum in (\ref{eq:2}) is a lower bound for the sum in (\ref{eq:1}). However, while it has been shown by Bouldin \cite{Bouldin80}  that (\ref{eq:2}) is valid for $p \geq 2$ if $A$ and $B$ are normal operators (without the additional assumption that $B-A$ is normal as well), even in the finite-dimensional case this inequality is far from being true for non-normal operators.
\begin{ex}\label{ex:1}
Let $\hil=\mc^2$ and $p>0$. For $x>0$ define
$$A = \left(
    \begin{array}{cc}
      0 & 1 \\
      0 & 0
    \end{array}\right) \quad \text{and} \quad  B(x) = \left(
    \begin{array}{cc}
      0 & 1 \\
      x & 0
    \end{array}\right).$$ Then $\sigma_d(A)=\{0\}, \sigma_d(B(x))=\{\sqrt{x}, -\sqrt{x}\}$ and 
$$\sum_{\lambda \in \sigma_d(B(x))}\dist(\lambda,\sigma_d(A))^p = 2 x^{p/2}.$$
Moreover, $\|B(x)-A\|_{\mathcal{S}_p}^p=x^p$. But $(2x^{p/2})/x^p=2x^{-p/2}$, which tends to infinity for $x \to 0$.
\end{ex}
 
Hence, in order to obtain a variant of Kato's inequality which remains valid for general $A$ and $B$, inequality (\ref{eq:2}) has to be further weakened. We will see in the next section that the replacement of the spectrum of $A$ by its numerical range is sufficient for this purpose.

\section{An eigenvalue estimate for bounded linear operators}\label{sec:bounded}

In the following let us denote the ideal of bounded linear operators on $\hil$ by $\bdd$ and let us recall that the numerical range of $A \in \bdd$ is defined as $$\num(A)=\{ \langle Af, f \rangle :  f \in \hil, \|f\|=1 \}.$$
It is well known that the spectrum of $A$ is contained in the closure of $\num(A)$ and that $\num(A)$ is always a convex set, see, e.g., \cite{b_Gustafson97}.

The next theorem is our desired variant of Kato's theorem. 
\begin{thm}\label{thm:2}
Let $p\geq1$ and let $A,B \in \bdd$ with $B-A \in \mathcal{S}_p$. Then 
\begin{equation}\label{eq:3}
  \sum_{\lambda \in \sigma_d(B)} \dist(\lambda, \overline{\num}(A))^p \leq \|B-A\|_{\mathcal{S}_p}^p,
\end{equation}  
where each eigenvalue is counted according to its algebraic multiplicity.
\end{thm}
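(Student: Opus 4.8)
My plan is to reduce the general case to a holomorphic-function / perturbation-determinant argument built around the resolvent of $A$ on the complement of $\overline{\num}(A)$. The key structural fact I would exploit is that for $z \notin \overline{\num}(A)$ one has the bound $\|(A-z)^{-1}\| \le 1/\dist(z,\overline{\num}(A))$, which is the replacement, valid for non-selfadjoint $A$, of the estimate $\|(A-z)^{-1}\| = 1/\dist(z,\sigma(A))$ that holds for normal operators. This is precisely why the numerical range, rather than the spectrum, is the right object: it restores the resolvent decay that makes the Schatten-class machinery go through.

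First I would set $M = B - A \in \mathcal{S}_p$ and, for $z$ in the open set $\Omega := \mc \setminus \overline{\num}(A)$, write $B - z = (A-z)(I + (A-z)^{-1}M)$, so that the discrete eigenvalues of $B$ lying in $\Omega$ are exactly the zeros (with algebraic multiplicity) of the function $z \mapsto \det_{\lceil p \rceil}\!\big(I + (A-z)^{-1}M\big)$, a regularized perturbation determinant which is well-defined and holomorphic on $\Omega$ because $(A-z)^{-1}M \in \mathcal{S}_p$ there. The standard bound on regularized determinants gives $|\det_{\lceil p\rceil}(I+(A-z)^{-1}M)| \le \exp(\Gamma_p \|(A-z)^{-1}M\|_{\mathcal{S}_p}^p) \le \exp(\Gamma_p \|M\|_{\mathcal{S}_p}^p \dist(z,\overline{\num}(A))^{-p})$. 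The second, more delicate, ingredient is a lower bound near a point of the (convex!) set $\overline{\num}(A)$: one needs a comparison like $|\det_{\lceil p\rceil}(\cdots)|$ staying bounded below, or rather an estimate controlling the growth of $\log|h(z)|$ as $z$ approaches $\overline{\num}(A)$, which one gets by writing the determinant as $1$ plus a term that is $O(\dist(z,\overline{\num}(A))^{-p})$ small — wait, that is the wrong direction, so instead I would normalize by considering $h(z) = \det_{\lceil p\rceil}(I+(A-z)^{-1}M)$ directly and note $h(z) \to 1$ as $|z|\to\infty$, combined with the exponential bound above.

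The heart of the matter is then a quantitative Blaschke/Jensen-type theorem for holomorphic functions on the unbounded convex-complement domain $\Omega$: a function $h$ holomorphic on $\Omega$ with $h(\infty)=1$ and satisfying $\log|h(z)| \le K\,\dist(z,\overline{\num}(A))^{-p}$ must have its zeros $\{\lambda_j\}$ obey $\sum_j \dist(\lambda_j,\overline{\num}(A))^p \le C_p K$, and — crucially for getting the \emph{sharp} constant with $C_p = 1$ and no extraneous factor — one wants this with the optimal constant. I expect this complex-analytic estimate to be the main obstacle: for the disk or half-plane it is classical (Jensen's formula / Blaschke condition), but for a general convex domain one must either conformally map, carefully tracking how the distance function transforms, or invoke a result tailored to complements of convex sets. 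I would look to reduce to the half-plane case by using that a convex set is an intersection of half-planes: for each supporting half-plane $H \supseteq \overline{\num}(A)$, apply the half-plane estimate to get $\sum_j \dist(\lambda_j, \mc\setminus H)^p \le \|M\|_{\mathcal{S}_p}^p$, and then — this is the subtle step — pass from the supremum over supporting half-planes of $\dist(\lambda, \mc\setminus H)$ (which equals $\dist(\lambda,\overline{\num}(A))$ by convexity) to a single inequality, likely via a limiting argument that chooses, for the bulk of the eigenvalues simultaneously, a near-optimal half-plane, or via a more clever direct argument on $\Omega$ itself. Once that analytic lemma is in hand, combining it with the determinant bounds of the previous paragraph yields \eqref{eq:3} immediately.
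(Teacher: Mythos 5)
Your strategy is a genuinely different one from the paper's, but as it stands it has a real gap, and in fact the gap sits exactly where you suspect it does. The whole argument hinges on a Blaschke/Jensen-type statement for zeros of a function $h$ holomorphic on $\Omega=\mc\setminus\overline{\num}(A)$ with $h(\infty)=1$ and $\log|h(z)|\leq K\,\dist(z,\overline{\num}(A))^{-p}$, asserting $\sum_j \dist(\lambda_j,\overline{\num}(A))^p\leq K$ with constant exactly $1$ and with no loss in the exponent. No such result is available, and the known theorems of this type (Borichev--Golinskii--Kupin) are precisely what produce the weaker bound \eqref{eq:5}: they lose an extra power $1+\eps$ of the distance near the boundary of the domain and yield non-explicit constants depending on $\eps$ and on $\|B-A\|$. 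Two further points compound this. First, the regularized-determinant upper bound $|\det_{\lceil p\rceil}(1+K)|\leq \exp(\Gamma_p\|K\|_{\mathcal{S}_p}^p)$ already carries a constant $\Gamma_p$ which is not $1$ in general, so the sharp inequality cannot survive this step. Second, your half-plane reduction does not aggregate: for each eigenvalue the supporting half-plane realizing $\dist(\lambda,\overline{\num}(A))$ is different, and applying the half-plane estimate separately for each choice gives a family of inequalities, each with the full right-hand side $\|M\|_{\mathcal{S}_p}^p$; there is no limiting or selection argument that recombines them into the single sum $\sum_\lambda \dist(\lambda,\overline{\num}(A))^p\leq\|M\|_{\mathcal{S}_p}^p$ (at best one multiplies the bound by the number of half-planes used).

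The paper's proof avoids complex analysis entirely and is where the sharp constant comes from: by Schur's lemma there is an orthonormal sequence $\{e_n\}$ triangularizing $B$ on its discrete spectrum, so $\langle Be_n,e_n\rangle=\lambda_n$, and the variational identity $\|K\|_{\mathcal{S}_p}^p=\sup\sum_n|\langle Ke_n,f_n\rangle|^p$ (valid for $p\geq 1$) applied with $f_n=e_n$ and $K=B-A$ gives
\begin{equation*}
\|B-A\|_{\mathcal{S}_p}^p\;\geq\;\sum_n|\lambda_n-\langle Ae_n,e_n\rangle|^p\;\geq\;\sum_n\dist(\lambda_n,\overline{\num}(A))^p,
\end{equation*}
since each $\langle Ae_n,e_n\rangle$ lies in $\num(A)$. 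Note that this is exactly where the numerical range enters --- not through a resolvent bound, but through the trivial fact that diagonal matrix elements of $A$ in any orthonormal system lie in $\num(A)$. If you want to salvage your approach, you should expect at best an inequality of the shape \eqref{eq:5}, not \eqref{eq:3}.
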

\begin{rem}
  In the following, let us agree that whenever a sum involving eigenvalues is considered, each eigenvalue is counted according to its algebraic multiplicity.
\end{rem}
The short proof of Theorem \ref{thm:2} (and of all other results discussed in this paper) will be presented in Section \ref{sec:proofs}. As we will see, it is a simple adaption of Bouldin's proof of inequality (\ref{eq:2}) for normal operators in \cite{Bouldin80}. Since $\sigma(A) \subset \overline{\num}(A)$ for $A \in \bdd$, we see that estimate (\ref{eq:3}) is indeed weaker than estimate (\ref{eq:2}). In particular, (\ref{eq:3}) only provides information on those eigenvalues of $B$ that are situated outside the numerical range of $A$.

If $A$ is normal, then the closure of its numerical range  coincides with the convex hull of its spectrum (see \cite{b_Gustafson97} Theorem 1.4-4.), so the following corollary is a direct consequence of Theorem \ref{thm:2}. 
\begin{cor}\label{cor:1}
Let $p\geq 1$ and let $A,B \in \bdd$  such that $B-A \in \mathcal{S}_p$. Moreover, let $A$ be normal and let $\sigma(A)$ be convex. Then
\begin{equation}\label{eq:4}
  \sum_{\lambda \in \sigma_d(B)} \dist(\lambda, \sigma(A))^p \leq \|B-A\|_{\mathcal{S}_p}^p.
\end{equation}  
\end{cor}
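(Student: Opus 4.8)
The plan is to deduce Corollary \ref{cor:1} directly from Theorem \ref{thm:2} by identifying the relevant sets. Since the hypotheses of Theorem \ref{thm:2} (namely $p \geq 1$, $A,B \in \bdd$, and $B-A \in \mathcal{S}_p$) are all in force, estimate (\ref{eq:3}) applies verbatim, giving
\begin{equation*}
  \sum_{\lambda \in \sigma_d(B)} \dist(\lambda, \overline{\num}(A))^p \leq \|B-A\|_{\mathcal{S}_p}^p.
\end{equation*}
So the entire content of the corollary reduces to showing that, under the additional hypotheses that $A$ is normal and $\sigma(A)$ is convex, one has $\overline{\num}(A) = \sigma(A)$; then the left-hand side of (\ref{eq:3}) coincides term by term with the left-hand side of (\ref{eq:4}) and we are done.

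To establish $\overline{\num}(A) = \sigma(A)$, I would invoke the cited fact (\cite{b_Gustafson97}, Theorem 1.4-4) that for a normal operator $A$ the closure of the numerical range equals the closed convex hull of the spectrum, i.e.\ $\overline{\num}(A) = \overline{\operatorname{conv}}\,\sigma(A)$. Since $A$ is bounded, $\sigma(A)$ is compact, hence its convex hull is already closed, so $\overline{\operatorname{conv}}\,\sigma(A) = \operatorname{conv}\,\sigma(A)$. By the convexity hypothesis, $\operatorname{conv}\,\sigma(A) = \sigma(A)$. Chaining these equalities yields $\overline{\num}(A) = \sigma(A)$, as needed. (One could equally note that $\sigma(A) \subset \overline{\num}(A)$ always holds for bounded $A$, while the reverse inclusion is exactly what the normality-plus-convexity gives via the Gustafson--Rao result.)

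There is essentially no obstacle here: the corollary is a pure specialization, and the only nontrivial ingredient — the coincidence of $\overline{\num}(A)$ with the convex hull of the spectrum for normal operators — is already quoted from the literature in the excerpt and may be used freely. The one point worth stating carefully is the compactness of $\sigma(A)$, which is what lets us drop the closure on the convex hull; this is automatic since $A \in \bdd$. Thus the proof is a two-line argument: apply Theorem \ref{thm:2}, then substitute $\overline{\num}(A) = \operatorname{conv}\,\sigma(A) = \sigma(A)$ into (\ref{eq:3}) to obtain (\ref{eq:4}).
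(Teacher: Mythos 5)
Your proposal is correct and matches the paper's own argument: the corollary is stated there as a direct consequence of Theorem \ref{thm:2} together with the cited fact that for normal $A$ the closure of the numerical range is the convex hull of the spectrum, which equals $\sigma(A)$ under the convexity hypothesis. Your added remark on compactness of $\sigma(A)$ (so the convex hull is already closed) is a harmless and accurate elaboration of the same reasoning.
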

Estimate (\ref{eq:4}) need not be true if $\sigma(A)$ is a non-convex set, see Remark \ref{rem:new} below. Moreover, we note that on an infinite-dimensional Hilbert space the assumption that $A$ is normal and $\sigma(A)$ is convex already implies that $\sigma(A)=\sigma_{ess}(A)$.
\begin{rem} 
Corollary \ref{cor:1} improves upon a result of Borichev et al., see \cite{Borichev08} Theorem 2.3. In the context of Jacobi operators they showed that  for $A$ selfadjoint with $\sigma(A)=[-2,2]$ the following inequality holds for $p \geq 1$ and every $\eps > 0$ 
 \begin{equation}
   \label{eq:5}
   \sum_{\lambda \in \sigma_d(B)} \frac{\dist(\lambda,[-2,2])^{p+1+\eps}}{|\lambda^2-4|} \leq C(p,\eps,\|B-A\|) \|B-A\|_{\mathcal{S}_p}^p.
 \end{equation}  
The proof of (\ref{eq:5}) uses methods of complex analysis and is thus  completely different from (and more involved than) the method of proof we will use below. In particular, (\ref{eq:4}) is stronger than (\ref{eq:5}) since for $\eps \in (0,1)$ and $\lambda \in \mc \setminus [-2,2]$
$$ \dist(\lambda, [-2,2])^p \geq C(\eps) \frac{\dist(\lambda,[-2,2])^{p+1+\eps}}{|\lambda^2-4|}.$$
  \end{rem}

While we haven't yet touched upon the validity of Kato's theorem and its known generalizations in case that $p \in (0,1)$,  the next example shows that, in the stated generality, (\ref{eq:3}) need not be true in this case.
\begin{ex}
  Let $p \in (0,1)$ and let $\hil=\mc^n$ where $n \geq 2$. Further, let  the $n \times n$-matrices $A$ and $B(x), x > 0,$ be defined by
$$ A = \left(
  \begin{array}{cccc}
    0 & 1 & &   0\\
      & 0 & \ddots &   \\
      &   & \ddots & 1   \\
    0  &   &        & 0  
  \end{array}\right), \quad 
B(x) = A + \left(
  \begin{array}{ccccc}
    0 & 0  & \cdots  & 0 \\
    \vdots  & \vdots    & & \vdots\\
     0 & 0  & \cdots   & 0\\
    x  & 0  & \cdots  & 0 
  \end{array}\right). $$
Then $\num(A)=\{ \lambda : |\lambda| \leq \cos(\pi/(n+1))\}$, see \cite{MR1072339} Proposition 1, and $\sigma_d(B(x))=\{\lambda: \lambda^n=x\}$. Moreover, $\|B(x)-A\|_{\mathcal{S}_p}^p=x^p$ and so for $x \geq [\cos(\pi/(n+1))]^n$  we obtain
$$ \frac{\sum_{\lambda \in \sigma_d(B(x))}\dist(\lambda,\num(A))^p}{\|B(x)-A\|_{\mathcal{S}_p}^p} = \frac{n [ x^{1/n}-\cos(\pi/(n+1))]^p}{x^p}.$$
Regarded as a function of $x$ this quotient takes its maximum value 
$$ \left( \frac{1-1/n}{\cos(\pi/(n+1))} \right)^{p(n-1)}n^{1-p}$$
at $x = \left[ {n(n-1)^{-1} \cos(\pi/(n+1))}\right]^n$. It remains to observe that for $p \in (0,1)$  this maximum value tends to infinity for $n \to \infty$. 
\end{ex}
\begin{rem}\label{rem:new}
Let $A$ and $B(x)$ be defined as in the previous example and set $A'=B(1)$ and $B'=A$. Then $A'$ is normal, $\|{B'}-A'\|_{\mathcal{S}_p}^p=1$ and $\sum_{\lambda \in \sigma_d({B'})} \dist(\lambda,\sigma_d(A'))^p = n$. This shows that Corollary \ref{cor:1} need not be true if the spectrum of the unperturbed operator is non-convex.
\end{rem}
Inequality (\ref{eq:3}) remains true for $p \in (0,1)$ if $A$ and $B$ are selfadjoint operators. This is in contrast to Kato's theorem and inequality (\ref{eq:2}), which, as we will show in the Appendix, need not be true in this case.
\begin{thm}\label{thm:3}
  Let $p \in (0,1)$ and let $A, B \in \bdd$ be selfadjoint operators such that $B-A \in \mathcal{S}_p$. Then
\begin{equation}\label{eq:6}
  \sum_{\lambda \in \sigma_d(B)} \dist(\lambda, \overline{\num}(A))^p \leq \|B-A\|_{\mathcal{S}_p}^p.
\end{equation}  
Here $\overline{\num}(A)= [ \min \sigma(A), \max \sigma(A)]$.   
\end{thm}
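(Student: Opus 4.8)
The plan is to exploit the selfadjointness of $K:=B-A$ and reduce the claim to a one-sided variational estimate, rather than adapting Bouldin's argument as in Theorem \ref{thm:2}. Note first that since $A$ is selfadjoint, $\overline{\num}(A)=[\min\sigma(A),\max\sigma(A)]=:[a,b]$, which is the last assertion of the theorem and follows, e.g., from the normal case of the relation between numerical range and spectrum. The operator $K$ is selfadjoint and compact and lies in $\mathcal{S}_p$; split it as $K=K_+-K_-$ into its positive and negative parts. The argument below works verbatim for every $p>0$; the hypothesis $p\in(0,1)$ is needed only because for $p\ge 1$ the conclusion is already contained in Theorem \ref{thm:2}.

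The first ingredient is a splitting identity for the $\mathcal{S}_p$-quasinorm. Since $K_+$ and $K_-$ are nonnegative compact operators living on mutually orthogonal spectral subspaces of $K$, the nonzero eigenvalues of $K$ are precisely the positive eigenvalues of $K_+$ together with the negatives of the positive eigenvalues of $K_-$, counted with multiplicity; hence, as a plain identity of (possibly infinite) sums,
\[
  \|K\|_{\mathcal{S}_p}^p=\|K_+\|_{\mathcal{S}_p}^p+\|K_-\|_{\mathcal{S}_p}^p .
\]
The second ingredient is the location of the relevant eigenvalues. By Weyl's theorem $\sigma_{ess}(B)=\sigma_{ess}(A)\subset[a,b]$, so $\sup\sigma_{ess}(B)\le b$ and $\inf\sigma_{ess}(B)\ge a$. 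Consequently every $\lambda\in\sigma_d(B)$ that contributes to the left-hand side of (\ref{eq:6}), i.e.\ with $\lambda\notin[a,b]$, is either $>b\ge\sup\sigma_{ess}(B)$, with $\dist(\lambda,[a,b])=\lambda-b$, or $<a\le\inf\sigma_{ess}(B)$, with $\dist(\lambda,[a,b])=a-\lambda$. List the eigenvalues of $B$ strictly above $\sup\sigma_{ess}(B)$ as a nonincreasing sequence $\lambda_1\ge\lambda_2\ge\cdots$, repeated according to multiplicity; only the $\lambda_n>b$ will matter, and symmetrically for the eigenvalues below $\inf\sigma_{ess}(B)$.

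For the main step, observe that for every unit vector $f$ one has $\langle Af,f\rangle\le b$ and $\langle Kf,f\rangle=\langle K_+f,f\rangle-\langle K_-f,f\rangle\le\langle K_+f,f\rangle$, so
\[
  \langle Bf,f\rangle\le b+\langle K_+f,f\rangle .
\]
Write $\mu_n^{\uparrow}(T):=\sup_{\dim V=n}\inf_{f\in V,\,\|f\|=1}\langle Tf,f\rangle$ for bounded selfadjoint $T$. By the variational (max--min) principle valid up to the top of the essential spectrum, $\mu_n^{\uparrow}(B)=\lambda_n$ whenever $\lambda_n>\sup\sigma_{ess}(B)$, while $\mu_n^{\uparrow}(K_+)$ equals $s_n(K_+)$ if $K_+$ has at least $n$ positive eigenvalues and equals $0$ otherwise (as $\sup\sigma_{ess}(K_+)=0$). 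The pointwise bound above gives $\mu_n^{\uparrow}(B)\le b+\mu_n^{\uparrow}(K_+)$ for every $n$, so for each $n$ with $\lambda_n>b$ we obtain $0<\lambda_n-b\le\mu_n^{\uparrow}(K_+)$; this forces $K_+$ to have at least $n$ positive eigenvalues, whence $\lambda_n-b\le s_n(K_+)$. Summing $p$-th powers over these indices,
\[
  \sum_{\lambda\in\sigma_d(B),\,\lambda>b}(\lambda-b)^p\le\sum_{n}s_n(K_+)^p=\|K_+\|_{\mathcal{S}_p}^p .
\]
Applying the identical reasoning to $-B=(-A)+(-K)$, with $(-K)_+=K_-$ and $\max\sigma(-A)=-a$, yields $\sum_{\lambda<a}(a-\lambda)^p\le\|K_-\|_{\mathcal{S}_p}^p$. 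Adding the two bounds and invoking the splitting identity,
\[
  \sum_{\lambda\in\sigma_d(B)}\dist(\lambda,[a,b])^p\le\|K_+\|_{\mathcal{S}_p}^p+\|K_-\|_{\mathcal{S}_p}^p=\|B-A\|_{\mathcal{S}_p}^p ,
\]
which is (\ref{eq:6}).

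The only genuinely delicate point is the main step. One must invoke the form of the max--min principle that remains correct up to the top (respectively bottom) of the essential spectrum, so that an eigenvalue of $B$ lying strictly above $b$ is genuinely matched with the $n$-th max--min value of $B$ and not merely with a point of $\sigma_{ess}(B)$; and one must notice that the strict inequality $\lambda_n>b$ is exactly what upgrades the estimate $\lambda_n-b\le\mu_n^{\uparrow}(K_+)$ into a bound by the honest singular value $s_n(K_+)$. The remaining ingredients (orthogonality of the spectral parts of $K$ with the attendant $\mathcal{S}_p$-identity, and the reduction of $\dist(\cdot,\overline{\num}(A))$ to one-sided distances) are routine.
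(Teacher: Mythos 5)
Your proof is correct and follows essentially the same route as the paper: the pointwise form bound $\langle Bf,f\rangle\le b+\langle (B-A)_+f,f\rangle$ combined with the variational principle is exactly the content of the paper's key lemma ($\lambda_n^+(B)-b\le s_n((B-A)_+)$ and its mirror image), and the conclusion via $\|(B-A)_+\|_{\mathcal{S}_p}^p+\|(B-A)_-\|_{\mathcal{S}_p}^p=\|B-A\|_{\mathcal{S}_p}^p$ is identical. The only cosmetic difference is that you use the max--min (sup over $n$-dimensional subspaces) formulation while the paper uses the min--max one, and you spell out slightly more carefully the case where $(B-A)_+$ has fewer than $n$ positive eigenvalues.
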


\begin{rem}\label{rem:1} 
Let $H_0 \geq 0$  and $H \geq - \omega$ (where $\omega > 0$) be unbounded selfadjoint operators and suppose that $e^{-tH}- e^{-tH_0} \in \mathcal{S}_p$ for some positive $p$ and $t$. Given these assumptions the spectrum of $H$ in $(-\infty,0)$ consists of discrete eigenvalues which can accumulate at $0$ only. As a simple consequence of Theorem \ref{thm:3} and the spectral mapping theorem we obtain that 
\begin{equation}\label{eq:7}
   \sum_{\lambda \in \sigma_d(H)} \dist(e^{-t\lambda}, [0,1])^p \leq \| e^{-tH}-e^{-tH_0} \|_{\mathcal{S}_p}^p.
\end{equation}
In particular, since $e^{x}-1 \geq x$ if $x>0$ we see that 
\begin{equation}\label{eq:8}
   \sum_{\lambda \in \sigma_d(H), \lambda < 0} |\lambda|^p \leq t^{-p} \| e^{-tH}-e^{-tH_0} \|_{\mathcal{S}_p}^p.
\end{equation}
The validity of (\ref{eq:8}) was previously known only with an additional multiplicative constant $C(H,p) \geq 1$ on the right-hand side of the inequality, see Demuth and  Katriel \cite{Demuth08} and Hansmann \cite{Hansmann08}.  We also note that for $p \geq 1$, due to Corollary \ref{cor:1}, inequality (\ref{eq:7}) remains valid when the assumption that $H$ is selfadjoint and semi-bounded is replaced with the assumption that $-H$ is the generator of a strongly continuous semigroup, see \cite{b_Engel00} for the relevant definitions. 
\end{rem}

\section{Application to Schr\"odinger operators}\label{sec:schroedinger}

In this section we apply the results of Section \ref{sec:bounded} to recover and improve some Lieb-Thirring type inequalities for Schr\"odinger operators with complex-valued potentials. For earlier results on this topic we refer to Frank et al. \cite{Frank06}, Bruneau and Ouhabaz \cite{MR2455843}, Laptev and Safronov \cite{MR2540070}, Demuth et al. \cite{DHK08_2} and Safronov \cite{Saf09b}. 

We begin with an application of Theorem \ref{thm:2} to unbounded operators. To this end, let us denote the class of all closed operators in $\hil$ by $\cld$.
 \begin{thm}\label{thm:4}
  Let $H_0 \in \cld$ be selfadjoint with $\sigma(H_0) \subset [0,\infty)$. Moreover, let $H \in \cld$ and assume that $(a+H_0)^{-1}-(a+H)^{-1} \in \mathcal{S}_p$ for some $p \geq 1$ and some $a>0$ with $-a \notin \sigma(H) \cup \sigma(H_0)$. Then 
  \begin{equation}
    \label{eq:9}
\sum_{\lambda \in \sigma_d(H),\Re(\lambda)>0} \frac{|\Im(\lambda)|^p}{|\lambda+a|^{2p}} \leq \|(a+H_0)^{-1}-(a+H)^{-1}\|_{\mathcal{S}_p}^p.
  \end{equation}
\end{thm}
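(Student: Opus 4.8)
The plan is to reduce Theorem \ref{thm:4} to Theorem \ref{thm:2} via the resolvent transformation, mimicking the standard "reduce unbounded problem to bounded problem" trick. Set $R_0=(a+H_0)^{-1}$ and $R=(a+H)^{-1}$, which are bounded operators on $\hil$, and by hypothesis $R-R_0\in\mathcal{S}_p$. First I would record the spectral mapping property for resolvents: if $\lambda\in\sigma_d(H)$ then $\mu:=(a+\lambda)^{-1}\in\sigma_d(R)$, with the algebraic multiplicities matching, because $z\mapsto(a+z)^{-1}$ is a holomorphic bijection between a neighborhood of $\lambda$ in $\mc$ and a neighborhood of $\mu$, carrying $\sigma(H)$ onto $\sigma(R)$ and preserving the Riesz projections. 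Hence the sum over $\sigma_d(H)$ with $\Re(\lambda)>0$ can be rewritten as a sum over the images $\mu\in\sigma_d(R)$.

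Next I would identify $\overline{\num}(R_0)$. Since $H_0$ is selfadjoint with $\sigma(H_0)\subset[0,\infty)$ and $-a\notin\sigma(H_0)$ with $a>0$, the operator $R_0=(a+H_0)^{-1}$ is selfadjoint (indeed nonnegative) with $\sigma(R_0)\subset(0,1/a]$. Therefore, by Theorem \ref{thm:3}'s description (or simply because for selfadjoint operators the closure of the numerical range is the convex hull of the spectrum), $\overline{\num}(R_0)=[\min\sigma(R_0),\max\sigma(R_0)]\subset[0,1/a]$, a subinterval of the real axis. The key geometric computation is then to bound $\dist(\mu,\overline{\num}(R_0))$ from below for $\mu=(a+\lambda)^{-1}$ with $\Re(\lambda)>0$. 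Writing $\lambda=s+it$ with $s>0$, one has $\mu=(a+s+it)^{-1}$, so $\Im(\mu)=-t/|a+\lambda|^2=-\Im(\lambda)/|a+\lambda|^2$. Since $\overline{\num}(R_0)$ lies on the real line, $\dist(\mu,\overline{\num}(R_0))\geq|\Im(\mu)|=|\Im(\lambda)|/|a+\lambda|^2$. Raising to the $p$-th power and summing over the relevant eigenvalues gives exactly the left-hand side of \eqref{eq:9} as a lower bound for $\sum_{\mu\in\sigma_d(R)}\dist(\mu,\overline{\num}(R_0))^p$.

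Finally I would apply Theorem \ref{thm:2} with the roles $A=R_0$, $B=R$: since $R-R_0\in\mathcal{S}_p$ and $p\geq1$, we get
\begin{equation*}
\sum_{\mu\in\sigma_d(R)}\dist(\mu,\overline{\num}(R_0))^p\leq\|R-R_0\|_{\mathcal{S}_p}^p=\|(a+H_0)^{-1}-(a+H)^{-1}\|_{\mathcal{S}_p}^p,
\end{equation*}
and combining with the lower bound from the previous paragraph yields \eqref{eq:9}.

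The main obstacle, and the step needing the most care, is the spectral-mapping bookkeeping for the \emph{discrete} spectrum and multiplicities: one must check that every $\lambda\in\sigma_d(H)$ with $\Re(\lambda)>0$ really does contribute a genuine discrete eigenvalue $\mu=(a+\lambda)^{-1}$ of $R$ of the same algebraic multiplicity (so that the inequality of Theorem \ref{thm:2}, which counts multiplicities, transfers correctly), and conversely that summing only over $\Re(\lambda)>0$ on the left is legitimate because we are discarding nonnegative terms on the right. Here one uses that $\lambda\notin\sigma(H_0)$ for such $\lambda$ (as $\sigma(H_0)\subset[0,\infty)$ forces $\Re$-positive non-real points and, for real $\lambda>0$, isolation in $\sigma(H)$ persists since $H_0$'s spectrum being an obstruction is handled by the hypothesis that these are isolated eigenvalues of finite multiplicity of $H$); the map $z\mapsto(a+z)^{-1}$ is conformal near such $\lambda$, matches Riesz projections, and hence preserves both isolatedness and algebraic multiplicity. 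Everything else is the elementary estimate $\dist(\mu,\mr)\geq|\Im\mu|$ together with the explicit formula for $\Im(a+\lambda)^{-1}$.
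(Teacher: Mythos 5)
Your proposal is correct and follows essentially the same route as the paper: apply Theorem \ref{thm:2} to $A=(a+H_0)^{-1}$, $B=(a+H)^{-1}$, use that $\overline{\num}((a+H_0)^{-1})\subset[0,a^{-1}]$ because the resolvent of a nonnegative selfadjoint operator is selfadjoint with spectrum in $[0,a^{-1}]$, and transfer the sum via the spectral mapping for discrete eigenvalues. The only (harmless) difference is that you bound $\dist\bigl((a+\lambda)^{-1},\overline{\num}((a+H_0)^{-1})\bigr)$ from below by $|\Im(a+\lambda)^{-1}|=|\Im(\lambda)|/|\lambda+a|^2$ using that the numerical range lies on the real axis, whereas the paper computes this distance to $[0,a^{-1}]$ exactly for $\Re(\lambda)>0$; both give identically the claimed inequality.
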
 

\begin{rem}
  The assumptions of the previous theorem imply that $\sigma_d(H)$ is contained in $\mc \setminus [0,\infty)$, with possible accumulation points in $[0,\infty)$. We have restricted our consideration to eigenvalues with positive real part in order to avoid some technical difficulties and because this is sufficient for our later purposes.
\end{rem}

In the following, let us set $H=-\Delta + V$ operating in $L^2(\mr^d), d \geq 1$. More precisely, we assume that $V \in L^p(\mr^d)$, where $p > d/2$ if $d \geq 2$ and $p \geq 1$ if $d=1$, and we define $H$ as the unique $m$-sectorial operator associated to the closed, densely defined, sectorial form 
$$\mathcal{E}(f,g)= \langle \nabla f , \nabla g \rangle + \langle Vf,g \rangle, \quad \dom(\mathcal{E})=W^{1,2}(\mr^d).$$
Given these assumptions the resolvent difference $(a+H)^{-1}-(a-\Delta)^{-1}$ is compact for $a$ sufficiently large and so the spectrum of $H$ consists of $[0,\infty)$ and a possible set of discrete eigenvalues, which can accumulate at $[0,\infty)$ only. An application of Theorem \ref{thm:4} will now allow us to prove the following result.

\begin{thm}\label{thm:5}
Let $V \in L^p(\mr^d)$, where $p > d/2$ if $d \geq 2$ and $p \geq 1$ if $d=1$. Moreover, let $\Re(V) \geq 0$. 
Then for every $a > 0$
  \begin{equation}
    \label{eq:10}
 \sum_{\lambda \in \sigma_d(H)} \frac{|\Im(\lambda)|^p}{|\lambda+a|^{2p}} \leq C_0  a^{d/2-2p} \|\Im(V)\|_{L^p}^p.
  \end{equation}
The constant $C_0$ is given explicitly by
\begin{equation}
  \label{eq:11}
 C_0= (2\pi)^{-d} \int_{\mr^d} \frac{dx}{(|x|^2+1)^p}.  
\end{equation}
\end{thm}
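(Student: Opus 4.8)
The plan is to deduce Theorem~\ref{thm:5} from Theorem~\ref{thm:4}, applied with the selfadjoint comparison operator $H_0=H_1:=-\Delta+\Re(V)$ (the $m$-sectorial operator associated with the form $\langle\nabla f,\nabla g\rangle+\langle\Re(V)f,g\rangle$ on $W^{1,2}(\mr^d)$). Since $\Re(V)\geq0$, $H_1$ is non-negative, so $\sigma(H_1)\subset[0,\infty)$. Moreover $\Re\langle Hf,f\rangle=\|\nabla f\|^2+\langle\Re(V)f,f\rangle\geq0$, so $\overline{\num}(H)\subset\{\Re z\geq0\}$ and in particular $-a\notin\sigma(H)\cup\sigma(H_1)$; once we know $(a+H_1)^{-1}-(a+H)^{-1}\in\mathcal{S}_p$ (which follows from the estimate below) the hypotheses of Theorem~\ref{thm:4} hold. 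The same positivity, applied to a normalized eigenfunction $f$, shows that every $\lambda\in\sigma_d(H)$ has $\Re(\lambda)>0$: if $\Re(\lambda)=0$ then $\nabla f=0$, forcing $f=0$ in $L^2(\mr^d)$. Hence the sum over $\sigma_d(H)$ in (\ref{eq:10}) equals the restricted sum in (\ref{eq:9}), and Theorem~\ref{thm:4} reduces the claim to the Schatten-norm estimate
\[
  \|(a+H_1)^{-1}-(a+H)^{-1}\|_{\mathcal{S}_p}^p\leq C_0\, a^{d/2-2p}\,\|\Im(V)\|_{L^p}^p .
\]

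To prove this, set $W:=\Im(V)\in L^p(\mr^d)$ (real-valued), $A_1:=a+H_1\geq a>0$, and factor $W=|W|^{1/2}S\,|W|^{1/2}$ with $S=\operatorname{sgn}(W)$, $\|S\|\leq1$. Put $N:=|W|^{1/2}A_1^{-1/2}$ and $M:=N^*SN=A_1^{-1/2}WA_1^{-1/2}$. Since an $L^p$-potential with $p>d/2$ (resp.\ $p\geq1$ if $d=1$) is infinitesimally form-bounded with respect to $-\Delta$, the operator $N$ is well defined and, as shown below, lies in $\mathcal{S}_{2p}$; in particular $M$ is a bounded selfadjoint operator. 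Representing the closed sectorial form of $a+H=(a+H_1)+i\Im(V)$ through its real part gives $a+H=A_1^{1/2}(1+iM)A_1^{1/2}$ in the form sense, hence
\[
  (a+H)^{-1}=A_1^{-1/2}(1+iM)^{-1}A_1^{-1/2},
\]
with $1+iM$ boundedly invertible and $\|(1+iM)^{-1}\|\leq1$ because $M=M^*$. Using $1-(1+iM)^{-1}=i(1+iM)^{-1}M$ and $M=N^*SN$ we obtain
\[
  (a+H_1)^{-1}-(a+H)^{-1}=i\,A_1^{-1/2}(1+iM)^{-1}N^*\,S\,N\,A_1^{-1/2}.
\]
Writing this as $[A_1^{-1/2}(1+iM)^{-1}N^*]\cdot S\cdot[NA_1^{-1/2}]$ and applying Hölder's inequality for Schatten norms ($\mathcal{S}_{2p}\cdot\mathcal{S}_\infty\cdot\mathcal{S}_{2p}\subset\mathcal{S}_p$) together with $\|A_1^{-1/2}\|\leq a^{-1/2}$ and $\|(1+iM)^{-1}\|\leq1$, we get
\[
  \|(a+H_1)^{-1}-(a+H)^{-1}\|_{\mathcal{S}_p}\leq a^{-1}\,\|N\|_{\mathcal{S}_{2p}}^2 .
\]

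It remains to estimate $\|N\|_{\mathcal{S}_{2p}}=\||W|^{1/2}A_1^{-1/2}\|_{\mathcal{S}_{2p}}$. From $\Re(V)\geq0$ we have $a-\Delta\leq A_1$ in the form sense, so $A_1^{-1}\leq(a-\Delta)^{-1}$, and a standard domination argument (comparing $|W|^{1/2}A_1^{-1}|W|^{1/2}$ with $|W|^{1/2}(a-\Delta)^{-1}|W|^{1/2}$ and using monotonicity of $\|\cdot\|_{\mathcal{S}_p}$ on positive operators) gives $\|N\|_{\mathcal{S}_{2p}}\leq\||W|^{1/2}(a-\Delta)^{-1/2}\|_{\mathcal{S}_{2p}}$. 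The Kato--Seiler--Simon inequality (applicable since $2p\geq2$) then yields
\[
  \||W|^{1/2}(a-\Delta)^{-1/2}\|_{\mathcal{S}_{2p}}\leq(2\pi)^{-d/(2p)}\,\||W|^{1/2}\|_{L^{2p}}\Big(\int_{\mr^d}\frac{d\xi}{(|\xi|^2+a)^p}\Big)^{1/(2p)},
\]
the $\xi$-integral converging precisely because $2p>d$. The substitution $\xi=\sqrt a\,\eta$ turns it into $a^{d/2-p}\int_{\mr^d}(|\eta|^2+1)^{-p}\,d\eta$; combining the last three displays, using $\||W|^{1/2}\|_{L^{2p}}^2=\|W\|_{L^p}$, and raising to the $p$-th power reproduces exactly $C_0\,a^{d/2-2p}\,\|W\|_{L^p}^p$ with $C_0$ as in (\ref{eq:11}). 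This proves (\ref{eq:10}).

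The step I expect to be the main obstacle is the form-resolvent identity $(a+H)^{-1}=A_1^{-1/2}(1+iM)^{-1}A_1^{-1/2}$ in the second paragraph: verifying that $M$ is bounded and selfadjoint and that $a+H$ is genuinely the $m$-sectorial operator associated with the form $\langle(1+iM)A_1^{1/2}\,\cdot\,,A_1^{1/2}\,\cdot\,\rangle$. This is where the hypothesis $V\in L^p$ (with $p>d/2$, resp.\ $p\geq1$ for $d=1$) enters, via the infinitesimal $(-\Delta)$-form-boundedness of $\Im(V)$; granting this, the identity is the familiar factorization of a sectorial form through its real part. Everything else — the reduction via Theorem~\ref{thm:4} and the numerical-range observations, the Hölder splitting, the domination step, and the scaling computation of the constant — is routine.
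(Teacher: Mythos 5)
Your argument is correct, and at the level of overall strategy it coincides with the paper's: reduce to Theorem~\ref{thm:4} with the comparison operator $H_0=-\Delta+\Re(V)$, factor the resolvent difference through $W=\Im(V)=|W|^{1/2}\operatorname{sign}(W)|W|^{1/2}$ so that H\"older's inequality in Schatten classes leaves only $\||W|^{1/2}(a-\Delta)^{-1/2}\|_{\mathcal{S}_{2p}}^{2p}$, and finish with the Kato--Seiler--Simon bound and the scaling $\int(|\xi|^2+a)^{-p}d\xi=a^{d/2-p}\int(|\xi|^2+1)^{-p}d\xi$, which reproduces $C_0a^{d/2-2p}$ exactly. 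The implementation of the key Schatten estimate, however, is genuinely different. The paper first proves an approximation lemma (stability of the eigenvalue sums under $L^p$-convergence $V_n\to V$), reduces to $V\in C_0^\infty$ so that $H$ and $H_0$ are operator sums on $W^{2,2}$, uses the second resolvent identity sandwiched by $(a-\Delta)^{1/2}(a+H_0)^{-1}$ and the closure of $(a+H)^{-1}(a-\Delta)^{1/2}$, and bounds both factors in operator norm by $a^{-1/2}$ via a direct computation exploiting $\Re(V)\geq 0$ and $m$-sectoriality. You instead stay at the form level for general $V\in L^p$, using the representation $(a+H)^{-1}=A_1^{-1/2}(1+iM)^{-1}A_1^{-1/2}$ with $A_1=a+H_0$ and $M=A_1^{-1/2}WA_1^{-1/2}$ bounded selfadjoint, $\|(1+iM)^{-1}\|\leq 1$, and then pass from $A_1^{-1}$ to $(a-\Delta)^{-1}$ by the form inequality $a-\Delta\leq A_1$ together with $NN^*\leq TT^*$ and min--max for singular values. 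This buys you something concrete: the approximation lemma becomes unnecessary, since the factorization is valid directly for the form-defined operators; the price is the extra (standard, and correctly sketched) domination step and the verification of the first representation theorem identity $(a+H)R=R(a+H)=1$ for $R=A_1^{-1/2}(1+iM)^{-1}A_1^{-1/2}$, which you rightly flag as the delicate point. Your explicit observation that every $\lambda\in\sigma_d(H)$ has $\Re(\lambda)>0$ (if $\Re(\lambda)=0$ then $\nabla f=0$ for the eigenfunction, hence $f=0$), so that the restricted sum of (\ref{eq:9}) really is the full sum in (\ref{eq:10}), is a point the paper passes over silently and is worth keeping.
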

\begin{rem}
  The assumption that $\Re(V) \geq 0$ implies that $\sigma(H)$ is contained in the right half-plane.
\end{rem}
Choosing $a=1$ in (\ref{eq:10}) we obtain a slight improvement of a recent result of Laptev and Safronov who showed in \cite{MR2540070}, Theorem $1$, that
\begin{equation}
  \label{eq:12}
\sum_{\lambda \in \sigma_d(H)} \left(\frac{|\Im(\lambda)|}{|\lambda+1|^2+1}\right)^p \leq C_0 \|\Im(V)\|_{L^p}^p.
\end{equation}
More importantly, the free parameter $a$ in (\ref{eq:10}) can be used to derive the following stronger estimate on the eigenvalues of $-\Delta +V$, which takes the behavior of the eigenvalues near $0$ and $\infty$ into better account.
\begin{thm}\label{thm:6}
  Let $V \in L^p(\mr^d)$, where $p > d/2$ if $d \geq 2$ and $p \geq 1$ if $d=1$. Moreover, let $\Re(V) \geq 0$. 
Then for every $\kappa > 0$
  \begin{equation}
    \label{eq:13}
 \sum_{\lambda \in \sigma_d(H), |\lambda|<1} \frac{|\Im(\lambda)|^p}{|\lambda|^{d/2-\kappa}} + \sum_{\lambda \in \sigma_d(H), |\lambda|\geq1} \frac{|\Im(\lambda)|^p}{|\lambda|^{d/2+\kappa}} \leq C_0C_1 \|\Im(V)\|_{L^p}^p.   
  \end{equation}
Here the constant $C_0$ is as defined in (\ref{eq:11}) and $C_1$ is given explicitly by
 \begin{equation*}
 C_1= \left(\int_0^\infty \frac{ds}{s^{1-\kappa}(1+s)^{2\kappa}}\right) \left( \int_0^\infty ds \frac{s^{2p-1-d/2+\kappa}}{(1+s)^{2(p+\kappa)}}\right)^{-1}.   
 \end{equation*}
\end{thm}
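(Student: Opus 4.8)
The plan is to integrate the one-parameter family of estimates (\ref{eq:10}) of Theorem \ref{thm:5} against a weight in the free parameter $a$ whose exponents are reverse-engineered to reproduce the constants in $C_1$. For $\kappa>0$ set $w(a)=a^{2p-1-d/2+\kappa}(1+a)^{-2\kappa}$, multiply (\ref{eq:10}) by $w(a)$ and integrate over $a\in(0,\infty)$. All summands being non-negative, Tonelli's theorem lets us interchange the sum over $\sigma_d(H)$ with the $a$-integration; and since $w(a)a^{d/2-2p}=a^{\kappa-1}(1+a)^{-2\kappa}$, the right-hand side collapses to exactly $C_0\,\|\Im(V)\|_{L^p}^p\, I_1$, where $I_1=\int_0^\infty \frac{ds}{s^{1-\kappa}(1+s)^{2\kappa}}$ is the first integral appearing in the definition of $C_1$ (finite precisely because $\kappa>0$). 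What remains is to bound the resulting left-hand side $\sum_{\lambda\in\sigma_d(H)}|\Im(\lambda)|^p\,F(\lambda)$, with $F(\lambda)=\int_0^\infty w(a)\,|\lambda+a|^{-2p}\,da$, from below by a suitable multiple of the left-hand side of (\ref{eq:13}).

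For this I would first discard the position of $\lambda$ in the half-plane by the triangle inequality $|\lambda+a|\le|\lambda|+a$, so that $F(\lambda)\ge G(|\lambda|)$ with $G(r):=\int_0^\infty w(a)(r+a)^{-2p}\,da$ (note $|\lambda|>0$, as $0\in\sigma_{ess}(H)$). The substitution $a=rt$ then gives the scaling identity $G(r)=r^{\kappa-d/2}\int_0^\infty t^{2p-1-d/2+\kappa}(1+rt)^{-2\kappa}(1+t)^{-2p}\,dt$, and the dichotomy in (\ref{eq:13}) is produced by two elementary bounds on the factor $(1+rt)^{-2\kappa}$: one has $1+rt\le1+t$ when $r\le1$, and $1+rt\le r(1+t)$ when $r\ge1$. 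In either case the $t$-integral reduces to $I_2:=\int_0^\infty ds\; \frac{s^{2p-1-d/2+\kappa}}{(1+s)^{2(p+\kappa)}}$, the second integral in the definition of $C_1$ (whose convergence near $0$ uses $2p-d/2+\kappa>0$, a consequence of $p>d/2$, respectively $p\ge1$ for $d=1$). This yields $G(r)\ge I_2\,r^{-(d/2-\kappa)}$ for $r\le1$ and $G(r)\ge I_2\,r^{-(d/2+\kappa)}$ for $r\ge1$.

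Substituting these lower bounds back, the left-hand side of the integrated inequality dominates $I_2$ times exactly the expression on the left of (\ref{eq:13}); dividing through by $I_2$ produces (\ref{eq:13}) with $C_1=I_1/I_2$, which is the stated value. I expect no serious obstacle: the sum–integral interchange is free because everything is non-negative, and the two bounds on $1+rt$ together with the Beta-type integral bookkeeping are routine. The only genuinely non-mechanical ingredient is the choice of the weight $w$, engineered so that $w(a)a^{d/2-2p}$ and the $t$-integral arising in $G(r)$ reduce precisely to the two integrals $I_1$ and $I_2$ that make up $C_1$.
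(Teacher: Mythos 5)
Your proposal is correct and follows essentially the same route as the paper: multiply (\ref{eq:10}) by the weight $a^{2p-1-d/2+\kappa}(1+a)^{-2\kappa}$, integrate in $a$, use $|\lambda+a|\le|\lambda|+a$, rescale $a=|\lambda|s$, and bound $(1+|\lambda|s)^{-2\kappa}\ge \max(|\lambda|,1)^{-2\kappa}(1+s)^{-2\kappa}$, which yields exactly $C_1=I_1/I_2$. The only (immaterial) difference is that the paper applies the triangle inequality before integrating rather than after.
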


\begin{rem}
The choice of the unit disk in (\ref{eq:13}) is somewhat arbitrary and it can be replaced with any other disk centered at zero by a suitable adaption of the constant on the right-hand side. The point is that we obtain a different estimate for eigenvalues accumulating to zero and for eigenvalues accumulating to $(0,\infty)$, respectively.
\end{rem}
If we were allowed to choose $\kappa=0$ in (\ref{eq:13}), then this inequality could be rewritten as 
  \begin{equation}
    \label{eq:14}
     \sum_{\lambda \in \sigma_d(H)} \frac{\dist(\lambda,[0,\infty))^p}{|\lambda|^{d/2}} \leq C(p,d) \|\Im(V)\|_{L^p}^p,   
  \end{equation}
which seems like a natural generalization of the selfadjoint Lieb-Thirring inequality $\sum_{\lambda < 0, \lambda \in \sigma_d(H)} |\lambda|^{p-d/2} \leq C(p,d) \|V_-\|_{L^p}^p$ to the non-selfadjoint setting. Whether (\ref{eq:14}) is indeed valid for Schr\"odinger operators with complex-valued potentials remains an interesting open question.

\section{Application to Jacobi operators}\label{sec:jacobi}

Next, we apply the results of Section \ref{sec:bounded} to recover and improve known Lieb-Thirring type estimates for selfadjoint and non-selfadjoint Jacobi operators. The first results on this topic are due to Hundertmark and Simon \cite{Hundertmark02}, who considered the selfadjoint case. Their results have been extended to  non-selfadjoint operators by Golinskii and Kupin \cite{Golinskii07}, Borichev et al. \cite{Borichev08} and Hansmann and Katriel \cite{HK09}.

Let $\{a_k\}_{k \in \mz}, \{b_k\}_{k \in \mz}$ and $ \{c_k\}_{k \in \mz}$ be bounded complex sequences. The associated Jacobi operator $J=J(a_k,b_k,c_k) : l^2(\mz) \to l^2(\mz)$ is defined as 
$$(J u)(k)=a_{k-1}u(k-1)+b_k u(k)+c_k u(k+1),$$ 
that is, with respect to the standard basis of $l^2(\mz)$ we have
$$ J= \left(
      \begin{array}{ccccccc}
       \ddots & \ddots & \ddots &  &  &  &  \\
        &  a_{-1} & b_0 & c_0 &  &  &   \\
        &  & a_0 & b_1 & c_1 &  &   \\
        &  &  & a_1 & b_2 & c_2 &  \\
         &  &  &  & \ddots & \ddots & \ddots
      \end{array}
    \right).$$
Throughout we assume that $J$ is a compact perturbation of the free Jacobi operator $J_0=J(1,0,1)$, i.e., 
$$(J_0 u)(k)=u(k-1)+ u(k+1).$$
The spectrum  of $J_0$ coincides with the interval
$[-2,2]$ and so the compactness of $J-J_0$ implies   that
$\sigma(J)$ consists of $[-2,2]$ and a possible set of discrete eigenvalues, which can accumulate at $[-2,2]$ only. 
We remark that, defining the sequence $d= \{d_k\}_{k \in \mz} $ as 
\begin{equation}\label{defdk}d_k=\max\Big(|a_{k-1}-1|,|a_{k}-1|,|b_k|,|c_{k-1}-1|,|c_{k}-1|\Big),\end{equation}
the compactness of $J-J_0$ is equivalent to $d_k$ converging to $0$. Moreover, it can be shown that for $p>0$,
\begin{equation}
  \label{eq:15}
\|J-J_0\|_{\mathcal{S}_p} \leq 3 \|d\|_{l^p},  
\end{equation}
see, e.g., \cite{HK09}, Lemma 8. We begin our discussion with the following result for selfadjoint Jacobi operators, which is an immediate consequence of Theorem \ref{thm:3} and estimate (\ref{eq:15}).
\begin{thm}
For $\{a_k\} \subset \mc, \{b_k\} \subset \mr$ and $c_k=\overline{a}_k$  let $J=J(a_k,b_k,c_k)$ be defined as above. Moreover, let $d=\{d_k\}$ be defined by (\ref{defdk}). Then for $p> 0$
  \begin{equation} 
\sum_{\lambda \in \sigma_d(J),\lambda < -2} |\lambda+2|^p + \sum_{\lambda \in \sigma_d(J),\lambda>2} |\lambda-2|^p \leq C(p) \|d\|_{l^p}^p.
  \end{equation}  
\end{thm}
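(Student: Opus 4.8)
The plan is to combine Theorem~\ref{thm:3} with the norm estimate (\ref{eq:15}), after checking that the self-adjointness hypotheses are met. Under the stated assumptions, namely $\{a_k\}\subset\mc$, $\{b_k\}\subset\mr$ and $c_k=\overline{a}_k$, the Jacobi matrix $J=J(a_k,b_k,c_k)$ is formally symmetric, and since $J-J_0$ is bounded (indeed compact, as $d_k\to 0$), $J$ is a bounded self-adjoint operator on $l^2(\mz)$. Likewise $J_0=J(1,0,1)$ is bounded self-adjoint with $\sigma(J_0)=[-2,2]$, so in the notation of Theorem~\ref{thm:3} we have $\overline{\num}(J_0)=[\min\sigma(J_0),\max\sigma(J_0)]=[-2,2]$. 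The hypothesis $J-J_0\in\mathcal{S}_p$ for the relevant range of $p$ follows from (\ref{eq:15}): since $d_k\to 0$ and, for $p>0$, $\|J-J_0\|_{\mathcal{S}_p}\leq 3\|d\|_{l^p}$, the perturbation lies in $\mathcal{S}_p$ whenever $\|d\|_{l^p}<\infty$ (and if $\|d\|_{l^p}=\infty$ the claimed inequality is trivial).

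Next I would split into the two cases $p\geq 1$ and $p\in(0,1)$. For $p\geq 1$, Theorem~\ref{thm:2} (or its self-adjoint specialization) gives
\[
\sum_{\lambda\in\sigma_d(J)}\dist(\lambda,[-2,2])^p\leq\|J-J_0\|_{\mathcal{S}_p}^p,
\]
while for $p\in(0,1)$ the same inequality holds by Theorem~\ref{thm:3}, precisely because $J$ and $J_0$ are self-adjoint and $\overline{\num}(J_0)=[-2,2]$. In either case, combining with (\ref{eq:15}) yields
\[
\sum_{\lambda\in\sigma_d(J)}\dist(\lambda,[-2,2])^p\leq 3^p\|d\|_{l^p}^p.
\]

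Finally I would rewrite the left-hand side. Since $\sigma(J)\subset\mr$ (by self-adjointness) and $\sigma_{ess}(J)=[-2,2]$, every discrete eigenvalue $\lambda$ of $J$ lies in $\mr\setminus[-2,2]$, i.e.\ either $\lambda<-2$ or $\lambda>2$. For $\lambda<-2$ one has $\dist(\lambda,[-2,2])=|\lambda+2|$, and for $\lambda>2$ one has $\dist(\lambda,[-2,2])=|\lambda-2|$. Hence
\[
\sum_{\lambda\in\sigma_d(J),\,\lambda<-2}|\lambda+2|^p+\sum_{\lambda\in\sigma_d(J),\,\lambda>2}|\lambda-2|^p=\sum_{\lambda\in\sigma_d(J)}\dist(\lambda,[-2,2])^p\leq 3^p\|d\|_{l^p}^p,
\]
so the theorem holds with $C(p)=3^p$. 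There is essentially no obstacle here: the only point requiring a line of care is the bookkeeping that the two one-sided sums together exhaust $\sigma_d(J)$ and that the distance to $[-2,2]$ collapses to $|\lambda\pm 2|$ on each side; everything substantive is already contained in Theorem~\ref{thm:3} and the norm bound (\ref{eq:15}).
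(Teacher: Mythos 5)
Your proposal is correct and follows exactly the route the paper intends: the paper presents this theorem as an immediate consequence of Theorem \ref{thm:3} (together with Theorem \ref{thm:2}/Corollary \ref{cor:1} for $p\geq 1$) and the bound (\ref{eq:15}), which is precisely your argument, including the identification $\overline{\num}(J_0)=[-2,2]$ and the constant $C(p)=3^p$.
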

For $p \geq 1$ this inequality is due to Hundertmark and Simon, see \cite{Hundertmark02} Theorem 4. For $p \in (0,1)$ it seems to be new.
\begin{rem}
  Actually, in \cite{Hundertmark02}, Theorem 4, the authors state that their inequality is true for $p\geq 1/2$. However, in their proof they consider the case $p\geq 1$ only.
\end{rem}
Let us consider the  non-selfadjoint case next. As before, the following theorem is a direct consequence of Corollary \ref{cor:1} and estimate (\ref{eq:15}). 
\begin{thm}\label{thm:7}
For $\{a_k\},\{b_k\},\{c_k\} \subset \mc$  let $J=J(a_k,b_k,c_k)$ be defined as above.  Moreover, let $d=\{d_k\}$ be defined by (\ref{defdk}). Then for $p\geq 1$
  \begin{equation}
    \label{eq:16}
\sum_{\lambda \in \sigma_d(J)} \dist(\lambda,[-2,2])^p \leq C(p) \|d\|_{l^p}^p.
  \end{equation}
\end{thm}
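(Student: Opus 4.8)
The plan is to obtain (\ref{eq:16}) by specializing Corollary \ref{cor:1} to the pair $A=J_0$, $B=J$ and then inserting the Schatten-norm bound (\ref{eq:15}); so this will be pure bookkeeping on top of results already proved.

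First I would dispose of the degenerate case: if $d\notin l^p$, then the right-hand side of (\ref{eq:16}) is $+\infty$ for any choice of $C(p)>0$ and there is nothing to prove. Hence from now on I may assume $\|d\|_{l^p}<\infty$. By (\ref{eq:15}) this already forces $\|J-J_0\|_{\mathcal{S}_p}\le 3\|d\|_{l^p}<\infty$, i.e.\ $J-J_0\in\mathcal{S}_p$, which is exactly the membership hypothesis required to apply Corollary \ref{cor:1}.

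Next I would check that $A=J_0$ meets the remaining hypotheses of Corollary \ref{cor:1}. The free Jacobi operator $J_0=J(1,0,1)$ has real, symmetric band structure ($a_k=c_k=1$, $b_k=0$), hence is a bounded selfadjoint — in particular normal — operator on $l^2(\mz)$; and, as recalled in the text just before the statement, $\sigma(J_0)=[-2,2]$, which is convex. Since also $p\ge 1$, Corollary \ref{cor:1} applies and gives
$$
\sum_{\lambda\in\sigma_d(J)}\dist(\lambda,[-2,2])^p
=\sum_{\lambda\in\sigma_d(J)}\dist(\lambda,\sigma(J_0))^p
\le\|J-J_0\|_{\mathcal{S}_p}^p ,
$$
each eigenvalue of $J$ being counted with algebraic multiplicity, in accordance with the convention fixed in the remark following Theorem \ref{thm:2}. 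Raising (\ref{eq:15}) to the $p$-th power and combining then yields the claim with the explicit constant $C(p)=3^p$. (Equivalently, one could route through Theorem \ref{thm:2} directly, using that $\overline{\num}(J_0)$ equals the convex hull of $\sigma(J_0)=[-2,2]$, which is $[-2,2]$ itself.)

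I do not expect any genuine obstacle here. The only points that merit a word of care are: (i) the finiteness of the left-hand side of (\ref{eq:16}) is itself part of the conclusion of Corollary \ref{cor:1}, not an a priori assumption, so the displayed chain of inequalities is meaningful even when $\sigma_d(J)$ is infinite; and (ii) the constant $3^p$ is not claimed to be optimal — it is inherited entirely from the crude estimate (\ref{eq:15}), so any sharpening of that Schatten bound would improve $C(p)$ correspondingly.
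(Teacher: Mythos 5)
Your proposal is correct and follows exactly the paper's route: Theorem \ref{thm:7} is stated there as a direct consequence of Corollary \ref{cor:1} applied with $A=J_0$ (selfadjoint, $\sigma(J_0)=[-2,2]$ convex) and $B=J$, combined with the Schatten-norm bound (\ref{eq:15}), giving $C(p)=3^p$. Your extra remarks on the degenerate case $d\notin l^p$ and on counting multiplicities are fine but add nothing beyond the paper's argument.
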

Inequality (\ref{eq:16}) improves upon results of Borichev et al., see \cite{Borichev08} Theorem 2.3, and complements a result of Hansmann and Katriel, who showed in \cite{HK09}, Theorem 1, that for $p>1$ and $\eps>0$, 
$$ \sum_{\lambda \in \sigma_d(J)} \frac{\dist(\lambda,[-2,2])^{p+\eps}}{|\lambda^2-4|^{1/2}} \leq C(p,\eps) \|d\|_{l^p}^p,$$
and that a similar estimate, with the exponent $1/2$ replaced by the exponent $1/2+ \eps/4$ in the denominator, is true for $p=1$ as well. Indeed, while the last inequality provides stronger estimates on sequences of eigenvalues converging to $\pm 2$, respectively, inequality (\ref{eq:16}) provides a better estimate on eigenvalues converging to some point in $(-2,2)$. 

\section{Proofs}\label{sec:proofs}

\subsection*{Proof of Theorem \ref{thm:2}}

Let us recall that the $p$th Schatten norm of a linear operator $K$ on $\hil$ can be computed using the following identity, see \cite{b_Simon05} Proposition 2.6.
\begin{lemma}
Let $K \in \mathcal{S}_p$, where $p\geq 1$. Then
\begin{equation}
\label{eq:17}
  \|K\|_{\mathcal{S}_p}^p = \sup_{\{e_n\},\{f_n\}} \left\{ \sum_{n}| \langle K e_n, f_n \rangle|^p \right\},
\end{equation}
where the supremum is taken with respect to arbitrary (finite or infinite) orthonormal sequences $\{e_n\}$ and $\{f_n\}$ in $\hil$.
\end{lemma}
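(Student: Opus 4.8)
The plan is to prove the identity (\ref{eq:17}) by establishing the two inequalities separately, both built on the Schmidt (canonical) representation of the compact operator $K$. Write $K = \sum_j s_j \langle \cdot, \phi_j\rangle \psi_j$, where $\{\phi_j\}$ and $\{\psi_j\}$ are orthonormal systems in $\hil$ and $s_j = s_j(K) > 0$, the series converging in operator norm. The inequality ``$\geq$'' in (\ref{eq:17}) is then immediate: taking $e_n = \phi_n$ and $f_n = \psi_n$ (valid finite or infinite orthonormal sequences) gives $\langle K e_n, f_n\rangle = s_n$, so that $\sum_n |\langle K e_n, f_n\rangle|^p = \sum_n s_n(K)^p = \|K\|_{\mathcal{S}_p}^p$, and hence the supremum on the right-hand side of (\ref{eq:17}) is at least $\|K\|_{\mathcal{S}_p}^p$.

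For the reverse inequality I would fix arbitrary orthonormal sequences $\{e_n\}$, $\{f_n\}$ in $\hil$ and expand, using the canonical form (the series may be paired term by term against $e_n$ and $f_n$ since it converges in operator norm),
\[
  \langle K e_n, f_n\rangle \;=\; \sum_j s_j\, \langle e_n, \phi_j\rangle\, \overline{\langle f_n, \psi_j\rangle}.
\]
Set $\alpha_{nj} := \langle e_n, \phi_j\rangle$ and $\beta_{nj} := \langle f_n, \psi_j\rangle$. Bessel's inequality, applied once to the orthonormal system $\{e_n\}$ and once to $\{\phi_j\}$, yields $\sum_n |\alpha_{nj}|^2 \le 1$ and $\sum_j |\alpha_{nj}|^2 \le 1$, and similarly for $\beta$. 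Consequently, the nonnegative numbers $d_{nj} := |\alpha_{nj}|\,|\beta_{nj}|$ satisfy $\sum_n d_{nj} \le 1$ and $\sum_j d_{nj} \le 1$ by the Cauchy--Schwarz inequality, and $|\langle K e_n, f_n\rangle| \le \sum_j s_j\, d_{nj}$.

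The key step is then to move the $p$th power inside the $j$-sum. A short convexity argument gives, for nonnegative reals $(s_j)$ and $(d_j)$ with $\sum_j d_j \le 1$ and for $p \ge 1$, the inequality $\big(\sum_j d_j s_j\big)^p \le \sum_j d_j s_j^p$ (rescale by $c := \sum_j d_j$, apply Jensen to the probability weights $d_j/c$, and use $c^{p-1} \le 1$); this is precisely where the hypothesis $p \ge 1$ enters. Applying this with $d_j = d_{nj}$, summing over $n$, and interchanging the two sums of nonnegative terms by Tonelli's theorem,
\[
  \sum_n |\langle K e_n, f_n\rangle|^p \;\le\; \sum_n \sum_j s_j^p\, d_{nj} \;=\; \sum_j s_j^p \sum_n d_{nj} \;\le\; \sum_j s_j(K)^p \;=\; \|K\|_{\mathcal{S}_p}^p,
\]
which bounds the supremum from above and completes the proof.

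I do not expect a genuine obstacle here, only some bookkeeping in the infinite-dimensional setting: one must check that $\sum_j s_j d_{nj} < \infty$ (immediate from $s_j \le \|K\|$ and $\sum_j d_{nj} \le 1$) and that the interchange of sums is legitimate (it is, since all terms are nonnegative). The only place sensitive to the range of $p$ is the convexity step, consistent with the fact that $p \ge 1$ is assumed throughout this part of the paper. Alternatively, this lemma is classical and may simply be quoted from \cite{b_Simon05}, but the argument sketched above is self-contained and short.
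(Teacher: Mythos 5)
Your proof is correct. Note, however, that the paper does not prove this lemma at all: it simply quotes it as Proposition 2.6 of \cite{b_Simon05}, and the subsequent proof of Theorem \ref{thm:2} only uses the easy direction with $f_n=e_n$. What you have written is essentially the standard proof of that cited proposition: Schmidt decomposition for the lower bound, and for the upper bound the observation that the coefficients $d_{nj}=|\langle e_n,\phi_j\rangle|\,|\langle f_n,\psi_j\rangle|$ form a doubly substochastic array (Bessel plus Cauchy--Schwarz), followed by the Jensen-type estimate $\bigl(\sum_j d_j s_j\bigr)^p\le\sum_j d_j s_j^p$ for $\sum_j d_j\le 1$, $p\ge 1$, and Tonelli to interchange the sums. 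All steps check out, including the convergence bookkeeping ($s_j\le\|K\|$ and norm convergence of the canonical expansion justify the term-by-term pairing), and your argument correctly isolates where $p\ge1$ is needed, consistent with the failure of the identity for $p<1$. So your contribution is a short self-contained substitute for the citation rather than a divergence from the paper's method; either quoting \cite{b_Simon05} or including your argument would be acceptable.
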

We also need the next result known as Schur's lemma (see, e.g., \cite{b_Gohberg69} Remark 4.1 on page 17).
\begin{lemma}
Let $B \in \bdd$ and let $ \lambda_1, \lambda_2, \ldots $  denote an enumeration of $\sigma_d(B)$, where each eigenvalue is counted according to its algebraic multiplicity and equal eigenvalues are denoted successively. Then there exists an orthonormal sequence $\{e_n\}$ in $\hil$ and a sequence of complex numbers $\{b_{jk}\}$ such that
\begin{equation}
\label{eq:18}
  Be_n=b_{n1}e_1 + b_{n2}e_2 + \ldots + b_{nn}e_n \quad \text{and} \quad   b_{nn}=\lambda_n.
\end{equation}
\end{lemma}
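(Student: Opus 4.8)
The plan is to prove Schur's lemma by constructing an increasing chain of finite-dimensional $B$-invariant subspaces $\{0\}=N_0 \subset N_1 \subset N_2 \subset \cdots$ with $\dim N_n = n$, such that $B$ acts on the one-dimensional quotient $N_n/N_{n-1}$ as multiplication by $\lambda_n$; applying Gram--Schmidt to any sequence $v_1,v_2,\ldots$ with $\operatorname{span}\{v_1,\dots,v_n\}=N_n$ then yields the required orthonormal sequence $\{e_n\}$.

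First I would enumerate the \emph{distinct} discrete eigenvalues of $B$ as $\mu_1,\mu_2,\ldots$ (a finite or countable set, being isolated points of the compact set $\sigma(B)$), ordered so that the enumeration $\lambda_1,\lambda_2,\ldots$ lists $\mu_1$ exactly $m_1$ times, then $\mu_2$ exactly $m_2$ times, and so on, where $m_j$ is the algebraic multiplicity of $\mu_j$; by hypothesis equal eigenvalues are consecutive, so such an ordering is available. For each $j$ let $P_j$ denote the Riesz projection associated with $\mu_j$ and set $\mathcal{L}_j = \operatorname{Ran} P_j$. I would recall the standard facts from the Riesz--Dunford functional calculus (see, e.g., \cite{b_Gohberg69}): $\mathcal{L}_j = \ker((B-\mu_j)^{m_j})$ is $m_j$-dimensional and $B$-invariant, the spectrum of the finite-dimensional operator $B|_{\mathcal{L}_j}$ equals $\{\mu_j\}$, and $P_iP_j=0$ for $i\neq j$ (choose disjoint contours and use the resolvent identity), so that the algebraic sum $\mathcal{L}_1+\mathcal{L}_2+\cdots$ is direct. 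Since every eigenvalue of $B|_{\mathcal{L}_j}$ equals $\mu_j$, there is a complete flag of $B$-invariant subspaces $\{0\}=F_0^{(j)}\subset F_1^{(j)}\subset\cdots\subset F_{m_j}^{(j)}=\mathcal{L}_j$ with $\dim F_i^{(j)}=i$ — either by the elementary finite-dimensional Schur triangularization, or by refining the chain $\ker N\subset\ker N^2\subset\cdots$ for the nilpotent $N=B|_{\mathcal{L}_j}-\mu_j$ — and on each quotient $F_i^{(j)}/F_{i-1}^{(j)}$ the operator $B$ acts as multiplication by $\mu_j$.

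Next I would assemble the global chain. Put $M_0=\{0\}$ and $M_k=\mathcal{L}_1\oplus\cdots\oplus\mathcal{L}_k$, a $B$-invariant subspace of dimension $m_1+\cdots+m_k$. For $n$ with $m_1+\cdots+m_{k-1}<n\leq m_1+\cdots+m_k$, write $n=m_1+\cdots+m_{k-1}+i$ with $1\leq i\leq m_k$ and define $N_n := M_{k-1}\oplus F_i^{(k)}$. Then $N_n$ is $B$-invariant (a sum of $B$-invariant subspaces), has dimension $n$, and $N_{n-1}\subset N_n$; moreover the inclusion $F_i^{(k)}\hookrightarrow N_n$ induces a $B$-intertwining isomorphism $F_i^{(k)}/F_{i-1}^{(k)}\cong N_n/N_{n-1}$, so $B$ acts on $N_n/N_{n-1}$ as multiplication by $\mu_k=\lambda_n$. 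Choosing $v_n\in N_n\setminus N_{n-1}$ and letting $\{e_n\}$ be the Gram--Schmidt orthonormalization of $\{v_n\}$, an easy induction gives $\operatorname{span}\{e_1,\dots,e_n\}=N_n$ for every $n$; hence $Be_n\in N_n$ yields the triangular representation $Be_n=b_{n1}e_1+\cdots+b_{nn}e_n$ with $b_{nn}=\langle Be_n,e_n\rangle$. Finally, since $e_n\perp N_{n-1}$ and $e_n\neq 0$, the class of $e_n$ is nonzero in $N_n/N_{n-1}$; reducing $Be_n=b_{nn}e_n+w$ (with $w\in N_{n-1}$) modulo $N_{n-1}$ and comparing with the fact that $B$ acts there as multiplication by $\lambda_n$ forces $b_{nn}=\lambda_n$, completing the proof.

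The only points requiring care are bookkeeping ones — keeping the enumeration with multiplicities synchronized with the block structure $M_{k-1}\oplus F_i^{(k)}$ — together with the classical properties of Riesz projections invoked above (finite dimensionality of the algebraic eigenspaces, their description via generalized eigenvectors, and their linear independence for distinct eigenvalues), which I would simply cite. There is no genuine analytic obstacle here: the argument is essentially a Hilbert-space version of finite-dimensional Schur triangularization, carried out compatibly with the direct-sum decomposition of $\hil$ along the algebraic eigenspaces of $B$, and it works verbatim whether $\sigma_d(B)$ is finite or infinite.
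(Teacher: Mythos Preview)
Your argument is correct and is essentially the classical proof of Schur's lemma: build a nested chain of $B$-invariant subspaces from the algebraic eigenspaces (via Riesz projections and a complete flag inside each block), then orthonormalize by Gram--Schmidt and read off the diagonal entries from the induced action on the one-dimensional quotients. The bookkeeping you flag is the only delicate point, and you handle it correctly.

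Note, however, that the paper does not supply its own proof of this lemma: it simply quotes the result as ``Schur's lemma'' and refers to \cite{b_Gohberg69}, Remark~4.1 on page~17. Your write-up is thus a spelled-out version of the argument that the cited reference contains, rather than an alternative to anything in the paper itself.
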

Starting with the proof of Theorem \ref{thm:2}, as in Schur's lemma let $\lambda_1, \lambda_2, \ldots$ denote an enumeration of $\sigma_d(B)$ and  let $\{e_n\}$ be a corresponding orthonormal sequence satisfying (\ref{eq:18}). Then we obtain from (\ref{eq:17}) that for $p \geq 1$
  \begin{equation*}
    \|B-A\|_{S_p}^p \geq \sum_{n} |\langle (B-A) e_n,e_n \rangle|^p = \sum_{n} |\langle Be_n,e_n \rangle - \langle Ae_n,e_n \rangle|^p.
  \end{equation*}
Using (\ref{eq:18})  we have $\langle Be_n, e_n \rangle = \lambda_n$ and so
\begin{eqnarray*}
    \|B-A\|_{S_p}^p \geq \sum_{n} |\lambda_n - \langle Ae_n,e_n \rangle|^p \geq \sum_{n} \dist(\lambda_n,\overline{\num}(A))^p
\end{eqnarray*} 
as desired.

\subsection*{Proof of Theorem \ref{thm:3}} First, let us introduce some notation: If $B \in \bdd$ is selfadjoint then we denote by
$$\lambda_1^- \leq \lambda_2^- \leq \ldots \leq \inf \sigma_{ess}(B)  \quad \text{and} \quad  \lambda_1^+ \geq \lambda_2^+ \geq \ldots \geq \sup \sigma_{ess}(B)$$ the eigenvalues of $B$, counted according to multiplicity, situated below and above the essential spectrum of $B$, respectively. If there exist only $N$ eigenvalues below (above) the essential spectrum, then we set $$\lambda_{N+1}^{-}=\lambda_{N+2}^{-}=\ldots=\inf \sigma_{ess}(B) \quad (\lambda_{N+1}^{+}=\lambda_{N+2}^{+}=\ldots=\sup \sigma_{ess}(B)).$$
\begin{rem}
  Since $\sigma_{ess}(B)$ is empty if $\hil$ is finite-dimensional, we set $\inf \emptyset = + \infty$ and $\sup \emptyset = -\infty$.
\end{rem}

For a proof of the following min-max principle we refer to \cite{b_Reed78}, Theorem XIII.1. 
\begin{prop}\label{prop:1}
  Let $B \in \bdd$ be selfadjoint. Then for every $n \in \mn$ we have 
  \begin{equation*} 
    \lambda_{n}^+= \inf_{\mathcal{W} \subset \hil, \;\dim(\mathcal{W})=n-1} \; \sup_{\psi \in \mathcal{W}^\perp, \|\psi\|=1} \langle B \psi, \psi \rangle
  \end{equation*}
and 
  \begin{equation*}
    \lambda_{n}^-= \sup_{\mathcal{W} \subset \hil, \;\dim(\mathcal{W})=n-1} \; \inf_{\psi \in \mathcal{W}^\perp, \|\psi\|=1} \langle B \psi, \psi \rangle.
  \end{equation*}
\end{prop}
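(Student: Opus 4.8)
The plan is to derive both identities from the spectral theorem for the bounded self-adjoint operator $B$. Since replacing $B$ by $-B$ interchanges the eigenvalues above and below the essential spectrum and turns the outer infimum of the first formula into the outer supremum of the second, it suffices to prove the formula for $\lambda_n^+$; the one for $\lambda_n^-$ then follows immediately. Write $\mu:=\sup\sigma_{ess}(B)$ and let $P_\Omega$ denote the spectral projection of $B$ associated with a Borel set $\Omega\subseteq\mr$. I would use three ingredients: (a) on $\operatorname{Ran}(P_{[s,\infty)})$ one has $\langle B\psi,\psi\rangle\geq s\|\psi\|^2$, and on $\operatorname{Ran}(P_{(-\infty,s]})$ one has $\langle B\psi,\psi\rangle\leq s\|\psi\|^2$; (b) when $\sigma_{ess}(B)\neq\emptyset$ the point $\mu$ lies in $\sigma_{ess}(B)$, which forces $\operatorname{rank}(P_{(\mu-\delta,\infty)})=\infty$ and $\operatorname{rank}(P_{(-\infty,\mu]})=\infty$ for every $\delta>0$, whereas every eigenvalue strictly above $\mu$ is isolated and of finite multiplicity; and (c) the elementary fact that a subspace of dimension $\geq n$ meets every subspace of codimension $\leq n-1$ nontrivially --- in particular it meets $\mathcal{W}^\perp$ whenever $\dim\mathcal{W}=n-1$.

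For the inequality $\lambda_n^+\leq\inf_{\dim\mathcal{W}=n-1}\sup_{\psi\in\mathcal{W}^\perp,\,\|\psi\|=1}\langle B\psi,\psi\rangle$, I would fix an admissible $\mathcal{W}$ and $\eps>0$ and consider $V:=\operatorname{Ran}(P_{[\lambda_n^+-\eps,\infty)})$. Its dimension is $\geq n$: if $\lambda_n^+>\mu$ because $V$ contains orthonormal eigenvectors for $\lambda_1^+,\dots,\lambda_n^+$, and if $\lambda_n^+=\mu$ because of ingredient (b). By (c) there is a unit vector $\psi\in V\cap\mathcal{W}^\perp$, and (a) gives $\langle B\psi,\psi\rangle\geq\lambda_n^+-\eps$; letting $\eps\to0$ and then taking the infimum over $\mathcal{W}$ yields the claim.

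For the reverse inequality I would exhibit a concrete $(n-1)$-dimensional $\mathcal{W}$ with $\mathcal{W}\supseteq\operatorname{Ran}(P_{(\lambda_n^+,\infty)})$, hence $\mathcal{W}^\perp\subseteq\operatorname{Ran}(P_{(-\infty,\lambda_n^+]})$, so that (a) immediately gives $\langle B\psi,\psi\rangle\leq\lambda_n^+$ for every unit $\psi\in\mathcal{W}^\perp$. When $\lambda_n^+>\mu$ one has $\operatorname{rank}(P_{(\lambda_n^+,\infty)})\leq n-1$ and may take $\mathcal{W}=\operatorname{span}(e_1,\dots,e_{n-1})$ for an orthonormal system $e_j$ of eigenvectors of $\lambda_j^+$ (choosing orthogonal eigenvectors within a repeated eigenvalue). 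When $\lambda_n^+=\mu$ one has $m:=\operatorname{rank}(P_{(\mu,\infty)})\leq n-1$, and $m$ is finite because a sequence of eigenvalues decreasing to $\mu$ would keep $\lambda_n^+>\mu$; here I would take $\mathcal{W}$ to be the span of an orthonormal eigenbasis of $\operatorname{Ran}(P_{(\mu,\infty)})$ together with $n-1-m$ further orthonormal vectors drawn from the infinite-dimensional space $\operatorname{Ran}(P_{(-\infty,\mu]})$. In either case the outer infimum is $\leq\lambda_n^+$, which combined with the previous paragraph gives equality.

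I expect the only genuinely delicate point to be the borderline case $\lambda_n^+=\sup\sigma_{ess}(B)$ together with the bookkeeping convention $\lambda_{N+1}^+=\dots=\sup\sigma_{ess}(B)$: one must exploit that $\mu\in\sigma_{ess}(B)$ makes $\operatorname{Ran}(P_{(\mu-\delta,\infty)})$ and $\operatorname{Ran}(P_{(-\infty,\mu]})$ simultaneously infinite-dimensional, and one must separately dispatch the degenerate finite-dimensional situation, where $\sigma_{ess}(B)=\emptyset$, $\sup\sigma_{ess}(B)=-\infty$, and for $n-1\geq\dim\hil$ the only admissible $\mathcal{W}$ is $\hil$ itself, so both sides equal $-\infty$. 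Away from this borderline the argument is exactly the classical Courant--Fischer one, which is why I would simply cite it (it is the bounded special case of \cite{b_Reed78}, Theorem XIII.1) rather than reproduce the details.
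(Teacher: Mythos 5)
Your proposal is sound, but note that the paper itself offers no proof of Proposition \ref{prop:1}: it is simply quoted (in its bounded special case) from Reed--Simon, Theorem XIII.1, i.e.\ exactly the reference you name, so your sketch supplies the standard Courant--Fischer/spectral-projection argument that the paper delegates to the literature. Two small points deserve tightening. First, ingredient (b) is overstated as a general fact: $\mu=\sup\sigma_{ess}(B)\in\sigma_{ess}(B)$ does give $\operatorname{rank}P_{(\mu-\delta,\mu+\delta)}=\infty$ and hence $\operatorname{rank}P_{(\mu-\delta,\infty)}=\infty$ for every $\delta>0$, but it does \emph{not} in general give $\operatorname{rank}P_{(-\infty,\mu]}=\infty$; for instance $B=\operatorname{diag}(1+1/k)_{k\geq 1}$ on $\ell^2$ has $\mu=1$ and $P_{(-\infty,1]}=0$. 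This does not break your proof, because you invoke that half of (b) only in the case $\lambda_n^+=\mu$, where, as you yourself argue, $m=\operatorname{rank}P_{(\mu,\infty)}\leq n-1$ is finite, and then $\operatorname{rank}P_{(\mu-\delta,\mu]}=\operatorname{rank}P_{(\mu-\delta,\mu+\delta)}-\operatorname{rank}P_{(\mu,\mu+\delta)}=\infty$, so $\operatorname{Ran}P_{(-\infty,\mu]}$ is indeed infinite-dimensional; you should state (b) in that conditional form. Second, your treatment of the degenerate finite-dimensional case is slightly off when $n-1>\dim\hil$: then there is no admissible $\mathcal{W}$ with $\dim\mathcal{W}=n-1$ at all and the infimum runs over the empty set; this harmless mismatch is an artifact of the formulation with $\dim\mathcal{W}=n-1$ exactly, and Reed--Simon's version (arbitrary, not necessarily independent, trial vectors $\phi_1,\dots,\phi_{n-1}$) avoids it. Apart from these cosmetic issues, both halves of your argument --- the dimension-counting intersection of $\operatorname{Ran}P_{[\lambda_n^+-\eps,\infty)}$ with $\mathcal{W}^\perp$ for one inequality, and the explicit test subspace containing $\operatorname{Ran}P_{(\lambda_n^+,\infty)}$ for the other, together with the reduction $B\mapsto -B$ for $\lambda_n^-$ --- are exactly the classical proof and are correct.
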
 
The next lemma is the main ingredient in the proof of Theorem \ref{thm:3}.
\begin{lemma}\label{lem:45}
Let $A, B \in \bdd$ be selfadjoint with $B-A \in \mathcal{S}_\infty$. Let $a=\inf \num(A)$ and $b=\sup \num(A)$, and let  $\lambda_1^-(B) \leq \lambda_2^-(B) \leq \ldots \leq a$ and $\lambda_1^+(B) \geq \lambda_2^+(B) \geq \ldots \geq b $  denote the eigenvalues of $B$ (counted according to multiplicity) situated  below $a$ and above $b$, respectively. Then 
\begin{equation}
  \label{eq:20}
a-\lambda_n^-(B) \leq s_n((B-A)_-)
\end{equation}
and
\begin{equation}
  \label{eq:21}
  \lambda_n^+(B)-b \leq s_n((B-A)_+). 
\end{equation}
\end{lemma}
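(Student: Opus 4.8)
The plan is to deduce both inequalities from the min-max principle of Proposition~\ref{prop:1}, evaluated on a single, cleverly chosen $(n-1)$-dimensional test subspace. First I would reduce to proving only~(\ref{eq:21}). Indeed, replacing the pair $(A,B)$ by $(-A,-B)$ turns $a=\inf\num(A)$ into $\sup\num(-A)=-a$, turns the eigenvalue $\lambda_n^-(B)$ of $B$ below $a$ into $-\lambda_n^+(-B)$, where $\lambda_n^+(-B)$ is an eigenvalue of $-B$ above $-a$, and turns $(B-A)_-$ into $\bigl((-B)-(-A)\bigr)_+$; moreover the hypotheses of the lemma are preserved. Hence~(\ref{eq:20}) for $(A,B)$ is literally~(\ref{eq:21}) for $(-A,-B)$, and it suffices to treat the latter.

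For~(\ref{eq:21}): since $B-A\in\mathcal{S}_\infty$ is compact and selfadjoint, so is its positive part $(B-A)_+$, and $s_n((B-A)_+)$ is just the $n$th largest eigenvalue of $(B-A)_+$, counted with multiplicity (and equal to $0$ once the rank of $(B-A)_+$ is exhausted). Let $\phi_1,\phi_2,\dots$ be a corresponding orthonormal system of eigenvectors and let $\mathcal{W}_n$ be the linear span of $\phi_1,\dots,\phi_{n-1}$ (so $\mathcal{W}_1=\{0\}$). I would then combine two elementary facts. First, $B-A\le(B-A)_+$ in the form sense, because $(B-A)_+-(B-A)=(B-A)_-\ge0$, and $\langle A\psi,\psi\rangle\le\sup\num(A)=b$ for every unit vector $\psi$, so that $\langle B\psi,\psi\rangle\le b+\langle(B-A)_+\psi,\psi\rangle$. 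Second, the spectral decomposition of $(B-A)_+$ gives $\langle(B-A)_+\psi,\psi\rangle\le s_n((B-A)_+)$ for every unit vector $\psi\in\mathcal{W}_n^\perp$. Feeding $\mathcal{W}_n$ into the upper min-max formula of Proposition~\ref{prop:1} then yields
\begin{equation*}
  \lambda_n^+(B)\le\sup_{\psi\in\mathcal{W}_n^\perp,\ \|\psi\|=1}\langle B\psi,\psi\rangle\le\sup_{\psi\in\mathcal{W}_n^\perp,\ \|\psi\|=1}\bigl(\langle A\psi,\psi\rangle+\langle(B-A)_+\psi,\psi\rangle\bigr)\le b+s_n((B-A)_+),
\end{equation*}
which is~(\ref{eq:21}).

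The only point needing a little care is reconciling the enumeration $\lambda_1^+(B)\ge\lambda_2^+(B)\ge\dots\ge b$ of the eigenvalues of $B$ above $b$ with the one used in Proposition~\ref{prop:1}, which is phrased in terms of eigenvalues above $\sigma_{ess}(B)$. Here I would invoke Weyl's theorem together with $\sigma(A)\subset\overline{\num}(A)=[a,b]$: since $\sigma_{ess}(B)=\sigma_{ess}(A)\subset\sigma(A)\subset[a,b]$, we get $\sup\sigma_{ess}(B)\le b$, so every eigenvalue of $B$ strictly above $b$ also lies above $\sigma_{ess}(B)$ and occupies the same position in the enumeration of Proposition~\ref{prop:1}; for the remaining indices $n$ (where the value has dropped to $b$ or below) the left-hand side of~(\ref{eq:21}) is $\le0$ and the inequality is trivial. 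Beyond this bookkeeping I do not foresee any real obstacle — the argument is in essence a one-line application of min-max, the two substantive ingredients being the choice of $\mathcal{W}_n$ and the operator inequality $B-A\le(B-A)_+$.
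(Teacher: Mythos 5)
Your proposal is correct and takes essentially the same route as the paper: the min-max principle of Proposition~\ref{prop:1} combined with the form inequality $B\le b+(B-A)_+$ (respectively $a-B\le (B-A)_-$), with the bookkeeping about $\sup\sigma_{ess}(B)\le b$ handled just as implicitly there. Your explicit choice of the test subspace spanned by the top eigenvectors of $(B-A)_+$ simply unfolds the paper's second application of min-max identifying $s_n((B-A)_\pm)$, and your reduction of (\ref{eq:20}) to (\ref{eq:21}) by passing to $(-A,-B)$ plays the role of the paper's ``completely analogous'' remark.
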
 
\begin{rem}
We note that $\sigma_{ess}(A)=\sigma_{ess}(B)$ since $B-A$ is compact. Hence, we have
$\inf \num(A) \leq \inf \sigma_{ess}(B)$ {and} $\sup \num(A) \geq \sup \sigma_{ess}(B)$. 
Moreover, we recall that $$(B-A)_\pm = 1 / 2 \left( |B-A| \pm (B-A) \right)$$ denote the positive and negative parts of $B-A$, respectively. 
\end{rem}
\begin{proof}[Proof of Lemma \ref{lem:45}] 
Since ${A} \geq a$, and so $(B-A)_- \geq A-B \geq a-B$, we obtain from Proposition \ref{prop:1} that 
\begin{eqnarray*}
      a-\lambda_{n}^-(B) &=& a- \sup_{\mathcal{W} \subset \hil, \;\dim(\mathcal{W})=n-1} \; \inf_{\psi \in \mathcal{W}^\perp, \|\psi\|=1} \langle B \psi, \psi \rangle \\
&=& \inf_{\mathcal{W} \subset \hil, \;\dim(\mathcal{W})=n-1} \; \sup_{\psi \in \mathcal{W}^\perp, \|\psi\|=1} \langle (a-B) \psi, \psi \rangle \\
&\leq& \inf_{\mathcal{W} \subset \hil, \;\dim(\mathcal{W})=n-1} \; \sup_{\psi \in \mathcal{W}^\perp, \|\psi\|=1} \langle (B-A)_- \psi, \psi \rangle \\
&=& \lambda_n^+((B-A)_-) = s_n((B-A)_-).
\end{eqnarray*}
The proof of (\ref{eq:21}) is completely analogous and is therefore omitted.
\end{proof}
We are now prepared for the proof of Theorem \ref{thm:3}: Let  $a=\inf \num(A)$ and $b= \sup \num (A)$. Since for $p > 0$ we can write
$$ \sum_{\lambda \in \sigma_d(B)} \dist(\lambda, \overline{\num}(A))^p = \sum_{\lambda \in \sigma_d(B), \lambda < a} (a-\lambda)^p + \sum_{\lambda \in \sigma_d(B), \lambda > b} (\lambda-b)^p,$$
the validity of (\ref{eq:6}) is a consequence of Lemma \ref{lem:45} and the fact that
$$ \| (B-A)_- \|_{\mathcal{S}_p}^p + \| (B-A)_+\|_{\mathcal{S}_p}^p= \|B-A\|_{\mathcal{S}_p}^p .$$

\subsection*{Proof of Theorem \ref{thm:4}}
Applying  Theorem \ref{thm:2} with $A=(a+H_0)^{-1}$ and $B=(a+H)^{-1}$ we obtain
\begin{equation*}
  \sum_{\mu \in \sigma_d((a+H)^{-1})} \dist(\mu, \overline{\num}((a+H_0)^{-1}))^p \leq \|(a+H)^{-1}-(a+H_0)^{-1}\|_{\mathcal{S}_p}^p.
\end{equation*}
Since $\sigma(H_0) \subset [0,\infty)$ by assumption, the spectral mapping theorem implies that $\sigma((a+H_0)^{-1}) \subset [0,a^{-1}]$. But then $\overline{\num}((a+H_0)^{-1}) \subset [0,a^{-1}]$ as well, since the numerical range of a bounded selfadjoint operator coincides with the convex hull of its spectrum. Using the spectral mapping theorem once again we thus see that the sum in the previous estimate is bounded from below by
\begin{equation*}
    \sum_{\lambda \in \sigma_d(H)} \dist((a+\lambda)^{-1},[0,a^{-1}])^p.
\end{equation*}
The proof is completed by noting that for $\Re(\lambda)>0$
\begin{equation}
  \label{eq:22}
 \dist((a+\lambda)^{-1},[0,a^{-1}]) = \frac{|\Im(\lambda)|}{|\lambda+a|^2},  
\end{equation}
as a short computation shows.

\subsection*{Proof of Theorem \ref{thm:5}}
We start with a lemma.
\begin{lemma}
  Let $V \in L^p(\mr^d)$, where $p > d/2$ if $d \geq 2$ and $p \geq 1$ if $d=1$. Moreover, let $V_n \in L^p(\mr^d)$  be chosen such that $\|V_n-V\|_{L^p} \to 0$ for $n \to \infty$. Let $K \subset \mc \setminus [0,\infty)$ be compact and let $f : \mc \setminus [0,\infty) \to \mr_+$ be continuous. Then
  \begin{equation}
    \label{eq:23}
    \sum_{\lambda \in \sigma_d(-\Delta+V) \cap K} f(\lambda) = \lim_{n \to \infty} \sum_{\lambda \in \sigma_d(-\Delta+V_n) \cap K} f(\lambda), 
  \end{equation}
where in each sum every eigenvalue is counted according to its algebraic multiplicity.
\end{lemma}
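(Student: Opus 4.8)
The plan is to establish the eigenvalue continuity statement \eqref{eq:23} via a standard perturbation-theoretic argument based on the convergence of resolvents and the holomorphic functional calculus. The key observation is that the hypotheses on $V$ guarantee that the maps $V \mapsto (a - \Delta + V)^{-1}$ are continuous from $L^p(\mr^d)$ (with the stated exponent range) into the compact operators on $L^2(\mr^d)$ for $a$ large enough; this is where the Sobolev-type bounds $p > d/2$ ($d \geq 2$) and $p \geq 1$ ($d = 1$) enter, ensuring that multiplication by $V$ is relatively form-compact with respect to $-\Delta$ and that the relevant resolvent differences lie in a suitable ideal. First I would fix $a > 0$ large enough that $-a$ is in the resolvent set of $-\Delta + V$ and of all $-\Delta + V_n$ for $n$ large (this follows from the resolvent convergence together with $\Re(V), \Re(V_n)$ being bounded below in the relevant norm sense), and rewrite everything in terms of the bounded operators $T = (a - \Delta + V)^{-1}$ and $T_n = (a - \Delta + V_n)^{-1}$. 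The spectral mapping theorem for the transformation $z \mapsto (a+z)^{-1}$ puts $\sigma_d(-\Delta+V) \cap K$ in bijection (preserving algebraic multiplicities) with a corresponding subset of $\sigma_d(T)$ lying in a compact set $\widetilde{K} \subset \mc \setminus \{0\}$ bounded away from the spectrum of $-a - \Delta$, i.e.\ away from $\sigma_{ess}(T) = [0, (a)^{-1}] \cap (\text{something near } 0)$; more precisely $\widetilde K$ is a compact subset of $\mc$ avoiding $[0,a^{-1}]$, hence avoiding the essential spectrum.

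Next I would invoke norm-resolvent–type convergence: one shows $\|T_n - T\|_{\mathcal{B}} \to 0$, or at least convergence in the operator norm on the relevant spectral subspace, which suffices because $T$ has only discrete spectrum in the region $\widetilde K$. The standard tool here is that for a compact operator $T$ and a contour $\Gamma$ enclosing a finite part of $\sigma_d(T)$ and avoiding $\sigma(T)$, the Riesz projections $P_n = \frac{1}{2\pi i} \oint_\Gamma (z - T_n)^{-1}\, dz$ converge in norm to $P = \frac{1}{2\pi i} \oint_\Gamma (z-T)^{-1}\,dz$ once $\|T_n - T\|_{\mathcal{B}} \to 0$, since $\sup_{z \in \Gamma}\|(z-T_n)^{-1}\|$ stays bounded. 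Norm convergence of projections of finite rank forces $\operatorname{rank}(P_n) = \operatorname{rank}(P)$ for $n$ large, i.e.\ the total algebraic multiplicity of eigenvalues of $T_n$ inside $\Gamma$ equals that of $T$; combined with the fact that eigenvalues of $T_n$ inside $\Gamma$ converge (as sets, with multiplicity) to those of $T$, and continuity of the pullback of $f$ under the spectral mapping, this yields \eqref{eq:23}. One covers $\sigma_d(-\Delta + V) \cap K$ — which is finite, being a discrete set in a compact set away from the essential spectrum — by finitely many such contours, each enclosing exactly one eigenvalue of $T$, and sums.

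The main obstacle I anticipate is proving the norm-resolvent convergence $\|T_n - T\|_{\mathcal{B}} \to 0$ (or a version strong enough to run the contour argument) from the mere assumption $\|V_n - V\|_{L^p} \to 0$, since $V$ and $V_n$ need not be bounded and the operators are non-selfadjoint and only $m$-sectorial. The way to handle this is the resolvent identity $T_n - T = T_n (V - V_n) T$ together with the estimates that $V \mapsto V \cdot (a - \Delta)^{-1}$ and $(a-\Delta)^{-1} \cdot V$ (suitably interpreted via $(a-\Delta)^{-1/2}$) are bounded, indeed compact, from $L^p$ into $\mathcal{B}$ in the stated range — a consequence of the Kato–Seiler–Simon / Sobolev inequality — so that $\|V_n - V\|_{L^p} \to 0$ transfers to operator-norm smallness of the resolvent difference, uniformly after factoring out the (uniformly bounded) resolvents of the sectorial operators $-\Delta + V_n$. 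A secondary technical point is verifying that $-a$ can be chosen in the resolvent set uniformly in $n$; this again follows by a Neumann series argument from the same relative-boundedness estimates once $a$ is large, since $\|V_n\|_{L^p}$ is bounded. Once these analytic inputs are in place, the topological conclusion about ranks of Riesz projections is routine.
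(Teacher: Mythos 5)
Your proposal is correct and follows essentially the same route as the paper's own (sketched) proof: deduce norm convergence of the resolvents $(a-\Delta+V_n)^{-1}\to(a-\Delta+V)^{-1}$ from $\|V_n-V\|_{L^p}\to 0$ via Sobolev/Kato--Seiler--Simon type relative bounds, conclude that the finitely many eigenvalues in $K$ converge with algebraic multiplicities preserved, and finish by continuity of $f$. The only difference is one of detail: the paper simply cites Kato (Theorem VI.3.5 and the discussion of eigenvalue continuity under norm resolvent convergence) for the multiplicity-preserving eigenvalue convergence, whereas you spell out the underlying Riesz-projection contour argument explicitly.
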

\begin{proof}
We only provide a sketch. More details can be found in \cite{MR2540070}, Section 2. Since $\|V_n - V\|_{L^p} \rightarrow 0$ for $n \to \infty$, we obtain that $$\|(a+(-\Delta+V_n))^{-1} - (a+(-\Delta+V))^{-1}\| \rightarrow 0$$ for $n \to \infty$ and $a>0$ sufficiently large, as, for instance, is a consequence of \cite{MR1335452}, Theorem VI.3.5. In particular, this implies that each eigenvalue  of $-\Delta + V$ is the limit of a sequence of eigenvalues  of $-\Delta +V_n$, with the multiplicities being preserved in the obvious manner, see the discussion on page 213 in \cite{MR1335452}. Since every compact set $K \subset \mc \setminus [0,\infty)$ can contain only finitely many eigenvalues of $-\Delta+V$, the continuity of $f$ implies the validity of (\ref{eq:23}).
\end{proof} 
Due to the previous lemma it is sufficient to prove Theorem \ref{thm:5} for $V \in C_0^\infty(\mr^d)$. In the following, let $H=-\Delta+V$ and set $H_0=-\Delta+\Re(V)$. Both $H$ and $H_0$ are defined on $\dom(-\Delta)=W^{2,2}(\mr^d)$. Moreover, $\sigma(H_0) \subset [0,\infty)$ and $\sigma(H) \subset \{ \lambda : \Re(\lambda) \geq 0 \}$ since $\Re(V) \geq 0$ by assumption. Hence, Theorem \ref{thm:4} implies that
\begin{equation}
  \label{eq:24}
\sum_{\lambda \in \sigma_d(H)} \frac{|\Im(\lambda)|^p}{|\lambda+a|^{2p}} \leq  \|(a+H_0)^{-1}-(a+H)^{-1}\|_{\mathcal{S}_p}^p,
\end{equation}
provided the right-hand side of (\ref{eq:24}) is finite. To obtain an estimate on this Schatten norm we adapt an idea used in \cite{MR2540070}. First, we rewrite the resolvent difference as 
\begin{eqnarray*}
 && (a+H_0)^{-1}-(a+H)^{-1} \nonumber \\
&=& i (a+H)^{-1}(a-\Delta)^{1/2}(a-\Delta)^{-1/2} |W|^{1/2} \operatorname{sign}(W) \nonumber \\
&& |W|^{1/2}(a-\Delta)^{-1/2}(a-\Delta)^{1/2}(a+H_0)^{-1}, 
\end{eqnarray*}
where $W=\Im(V)$ and $\operatorname{sign}(W)=W/|W|$. We will show below that $(a-\Delta)^{1/2}(a+H_0)^{-1}$ is bounded on $\hil$ with
\begin{equation}
  \label{eq:25}
\|(a-\Delta)^{1/2}(a+H_0)^{-1}\| \leq a^{-1/2}  
\end{equation}
and that the same is true for the closure of $(a+H)^{-1}(a-\Delta)^{1/2}$, initially defined on $\operatorname{Dom}((-\Delta)^{1/2})=W^{1,2}(\mr^d)$, i.e.,
\begin{equation}
  \label{eq:26}
  \|\overline{(a+H)^{-1}(a-\Delta)^{1/2}}\| \leq a^{-1/2}.
\end{equation}
Hence,
\begin{eqnarray*}
&& \|(a+H_0)^{-1}-(a+H)^{-1}\|_{\mathcal{S}_p}^p \\
&\leq&  a^{-p} \|(a-\Delta)^{-1/2} |W|^{1/2} \operatorname{sign}(W)|W|^{1/2}(a-\Delta)^{-1/2} \|_{\mathcal{S}_p}^p \\
&\leq& a^{-p} \|(a-\Delta)^{-1/2} |W|^{1/2}\|_{\mathcal{S}_{2p}}^{2p},
\end{eqnarray*}
where for the last estimate we used H\"older's inequality for Schatten norms, see \cite{b_Simon05} Theorem 2.8. Since $p\geq1$ and $p>d/2$, the proof is concluded by an application of Theorem 4.1 from \cite{b_Simon05}, i.e.,
$$ \|(a-\Delta)^{-1/2} |W|^{1/2}\|_{\mathcal{S}_{2p}}^{2p} \leq (2\pi)^{-d} \|(a+|.|^2)^{-1}\|_{L^p}^p \|W\|_{L^p}^p.$$
It remains to prove (\ref{eq:25}) and (\ref{eq:26}). To this end, let $f \in L^2(\mr^d)$. Then
  \begin{eqnarray*}
  &&  \|(a-\Delta)^{1/2}(a+H^*)^{-1}f\|^2 \\
&=& \langle f, (a+H^*)^{-1}f \rangle - \langle \overline{V} (a+H^*)^{-1}f, (a+H^*)^{-1}f \rangle.
  \end{eqnarray*}
Since $\Re(V) \geq 0$ we obtain
  \begin{eqnarray}
  &&  \|(a-\Delta)^{1/2}(a+H^*)^{-1}f\|^2 \nonumber \\
&=& \Re(\langle f, (a+H^*)^{-1}f \rangle) - \Re(\langle \overline{V} (a+H^*)^{-1}f, (a+H^*)^{-1}f \rangle) \nonumber \\
&\leq& \Re(\langle f, (a+H^*)^{-1}f \rangle) \leq |\langle f, (a+H^*)^{-1}f \rangle| \nonumber \\
&\leq& \|f\|^2\|(a+H)^{-1}\| \leq \frac{ \|f\|^2}{\dist(-a,\overline{\num}(H))}\leq a^{-1} \|f\|^2, \label{eq:27}
  \end{eqnarray}
where for the last two estimates we used that $H$ is $m$-sectorial with $\overline{\num}(H) \subset \{ \lambda : \Re(\lambda) \geq 0\}$. But (\ref{eq:27}) implies (\ref{eq:26}) since $$\overline{(a+H)^{-1}(a-\Delta)^{1/2}}=[(a-\Delta)^{1/2}(a+H^*)^{-1}]^*.$$ The proof of (\ref{eq:25}) is similar (and even simpler) and is therefore omitted.

\subsection*{Proof of Theorem \ref{thm:6}}
Using that $|\lambda+a| \leq |\lambda|+a$ we obtain from (\ref{eq:10}) that
\begin{equation*}
  \sum_{\lambda \in \sigma_d(H)} \frac{|\Im(\lambda)|^p a^{2p-d/2-1+\kappa}}{(|\lambda|+a)^{2p}(1+a)^{2\kappa}} \leq C_0 \frac{\|\Im(V)\|_{L^p}^p}{a^{1-\kappa}(1+a)^{2\kappa} }.
\end{equation*}
Integrating both sides of this inequality with respect to $a \in (0,\infty)$ leads to
\begin{eqnarray}
&&  \sum_{\lambda \in \sigma_d(H)} |\Im(\lambda)|^p \int_0^\infty da \: \frac{ a^{2p-d/2-1+\kappa}}{(|\lambda|+a)^{2p}(1+a)^{2\kappa}} \nonumber \\
&\leq& C_0 \|\Im(V)\|_{L^p}^p \int_0^\infty \frac{da}{a^{1-\kappa}(1+a)^{2\kappa} }.  \label{eq:28}
\end{eqnarray}
Here the right-hand side is finite since $\kappa > 0$. Substituting $|\lambda| s =a$, the integral on the left-hand side of (\ref{eq:28}) can be rewritten as
\begin{equation}
\label{eq:29}
    \frac{1}{|\lambda|^{d/2-\kappa}} \int_0^\infty ds\: \frac{s^{2p-d/2-1+\kappa}}{(1+s)^{2p}(1+|\lambda|s)^{2\kappa}}
\end{equation}
and this integral is bounded from below by
\begin{equation}
  \label{eq:30}
      \frac{1}{|\lambda|^{d/2-\kappa}\operatorname{max}(|\lambda|,1)^{2\kappa}} \int_0^\infty ds \: \frac{s^{2p-d/2-1+\kappa}}{(1+s)^{2p+2\kappa}}.
\end{equation}
Now (\ref{eq:28})-(\ref{eq:30}) imply the validity of (\ref{eq:13}).

\section{Appendix}

The following proposition and its corollary imply that inequality (\ref{eq:2}), and hence Kato's theorem, need not be true for selfadjoint operators $A,B$ in case that $p \in (0,1)$.

\begin{prop}\label{prop:2}
 Let $A,B$ be hermitian matrices on $\mc^n$. Then for $p \in (0,1)$
\begin{equation}
  \label{eq:31}
 \sum_{\lambda \in \sigma_d(B)} \dist(\lambda, \sigma_d(A))^p  \leq n^{1-p} \|B-A\|_{\mathcal{S}_p}^p.  
\end{equation} 
Moreover, the constant $n^{1-p}$ cannot be replaced with any smaller constant.
\end{prop}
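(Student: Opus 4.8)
The plan is to reduce the matrix inequality \eqref{eq:31} to a pointwise estimate on the eigenvalues of $B$ combined with the power-mean inequality. First I would diagonalize: let $\mu_1,\dots,\mu_n$ be the eigenvalues of $B$ (with multiplicity) and $\nu_1,\dots,\nu_n$ those of $A$, both real since the matrices are hermitian. By Theorem \ref{thm:3} (or rather its finite-dimensional content, which in the hermitian matrix case is classical and also follows from Lemma \ref{lem:45} applied with the numerical range replaced by intervals between consecutive eigenvalues) one has the Kato-type bound for $p\ge 1$; but here $p\in(0,1)$ so I instead want to start from the case $p=1$. For $p=1$, estimate \eqref{eq:2} is known to hold for hermitian (indeed normal) matrices — it is Kato's theorem, or Bhatia--Davis — giving $\sum_{j}\dist(\mu_j,\sigma_d(A)) \le \|B-A\|_{\mathcal{S}_1}$. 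Equivalently, writing $e_j := \dist(\mu_j,\sigma_d(A))$ and $s_j := s_j(B-A)$, we have $\sum_j e_j \le \sum_j s_j$.

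Next I would pass from $p=1$ to $p\in(0,1)$ by the concavity of $t\mapsto t^p$. The key observation is that the left-hand side of \eqref{eq:31} is $\sum_j e_j^p$ and the right-hand side involves $\sum_j s_j^p$. By the power-mean (Hölder / Jensen) inequality applied to the $n$-term sum,
\[
\sum_{j=1}^n e_j^p \le n^{1-p}\Big(\sum_{j=1}^n e_j\Big)^p.
\]
Combining with the $p=1$ bound $\sum_j e_j \le \sum_j s_j$ and then using $\big(\sum_j s_j\big)^p \le \sum_j s_j^p$ (again concavity, this time in the direction that sums of $p$th powers dominate the $p$th power of the sum, for $p\le 1$), we get
\[
\sum_{j=1}^n e_j^p \le n^{1-p}\Big(\sum_{j=1}^n s_j\Big)^p \le n^{1-p}\sum_{j=1}^n s_j^p = n^{1-p}\|B-A\|_{\mathcal{S}_p}^p,
\]
which is \eqref{eq:31}. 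The main thing to be careful about is that there are genuinely $n$ terms on the left (each eigenvalue with multiplicity, and the trivial eigenvalues contribute $0$ harmlessly), so the constant in the power-mean step is exactly $n^{1-p}$ and not something larger.

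For sharpness, I would exhibit a one-parameter family showing $n^{1-p}$ is optimal; the natural candidate is essentially Example \ref{ex:1}/Remark \ref{rem:new} in the right configuration, or more cleanly: take $A=0$ (the $n\times n$ zero matrix) and $B = \varepsilon \,\mathrm{diag}(1,1,\dots,1) = \varepsilon I_n$. Then $\sigma_d(A)=\{0\}$, every eigenvalue of $B$ equals $\varepsilon$, so $\sum_{\lambda\in\sigma_d(B)}\dist(\lambda,\sigma_d(A))^p = n\varepsilon^p$, while $\|B-A\|_{\mathcal{S}_p}^p = n\varepsilon^p$ as well — that gives ratio $1$, not $n^{1-p}$, so this is the wrong example. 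Instead I would take $B-A$ to be rank one: let $A=0$ and $B = \varepsilon\, vv^*$ with $\|v\|=1$, no — that has only one nonzero eigenvalue. The correct extremal configuration is the one where $B-A$ has all $n$ singular values equal while $B$ has all eigenvalues at distance $\varepsilon$ from $\sigma_d(A)$ and $\|B-A\|_{\mathcal{S}_p}^p$ is as small as possible relative to that: take $A$ with a single eigenvalue $0$ of multiplicity $n$ and $B$ a hermitian matrix with spectrum $\{\varepsilon,\dots,\varepsilon\}$ but chosen so $B-A = B$ has one singular value... this forces $B=\varepsilon I$. The resolution is that equality in power-mean needs all $e_j$ equal \emph{and} equality in $(\sum s_j)^p = \sum s_j^p$ needs only one $s_j$ nonzero, which are incompatible unless $n=1$; hence the constant $n^{1-p}$ is approached but the extremal analysis must be done as a limit. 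I expect the sharpness half to be the main obstacle: one should track which inequality is tight and construct a sequence $A^{(k)},B^{(k)}$ (for instance perturbations making $B$ have $n$ nearly-equal eigenvalues each at distance $\approx\varepsilon$ from a clustered spectrum of $A$, while $\|B-A\|_{\mathcal{S}_p}^p \to$ its minimum) along which the ratio in \eqref{eq:31} tends to $n^{1-p}$; the companion Example in the paper (the nilpotent shift plus corner perturbation, whose maximal ratio $\big(\tfrac{1-1/n}{\cos(\pi/(n+1))}\big)^{p(n-1)} n^{1-p}$ already exhibits the $n^{1-p}$ growth) strongly suggests the right mechanism, and I would adapt that computation, replacing the nilpotent by a genuinely hermitian nearby matrix, to produce the hermitian extremizing sequence.
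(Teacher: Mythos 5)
Your proof of the inequality \eqref{eq:31} is correct and is essentially the paper's argument: the chain $\sum_j e_j^p \le n^{1-p}\bigl(\sum_j e_j\bigr)^p \le n^{1-p}\|B-A\|_{\mathcal{S}_1}^p \le n^{1-p}\|B-A\|_{\mathcal{S}_p}^p$ is exactly the combination of the power-mean (inverse H\"older) inequality, the $p=1$ case of \eqref{eq:2} for hermitian matrices, and the monotonicity of Schatten norms that the paper uses.

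The sharpness half, however, is a genuine gap: you never produce a construction. Your diagnosis of the equality conditions is sound --- a near-extremal pair must have $B-A$ essentially rank one (so that $\|B-A\|_{\mathcal{S}_1}\approx\|B-A\|_{\mathcal{S}_p}$) while all $n$ eigenvalue displacements are nearly equal (so that the power-mean step is nearly tight) --- but you stop at the observation that these cannot hold exactly simultaneously, and your closing plan of ``adapting'' the nilpotent-shift example to a hermitian nearby matrix is not the right mechanism (that example exploits the $n$th-root splitting of a Jordan block, which has no hermitian analogue). What actually works, and what the paper does, is first-order perturbation theory for a rank-one perturbation: take $A=\operatorname{diag}(\alpha_1,\dots,\alpha_n)$ with pairwise distinct real $\alpha_k$, let $v=n^{-1/2}(1,\dots,1)^T$, and set $B(x)=A+x\,vv^*$ for small $x>0$. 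Since the $e_k$ are simple eigenvectors of $A$, the eigenvalues of $B(x)$ satisfy $\lambda_k(x)=\alpha_k+x|\langle v,e_k\rangle|^2+O(x^2)=\alpha_k+x/n+O(x^2)$, so $\sum_{\lambda\in\sigma_d(B(x))}\dist(\lambda,\sigma_d(A))^p = n\,(x/n)^p+O(x^{2p})$, while $\|B(x)-A\|_{\mathcal{S}_p}^p=x^p$ because the perturbation is rank one. Letting $x\to 0$ the ratio tends to $n^{1-p}$, which shows the constant cannot be replaced by a smaller one (it is attained only in the limit, consistent with your incompatibility remark). Without this (or an equivalent) limiting family, the second assertion of the proposition remains unproved in your write-up.
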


\begin{cor}
Let $p \in (0,1)$. Then there does not exist a constant $C(p)>0$ such that for every complex separable Hilbert space $\hil$ and all selfadjoint bounded operators $A,B$ on $\hil$ with $B-A \in \mathcal{S}_p$ we have
$$ \sum_{\lambda \in \sigma_d(B)} \dist(\lambda,\sigma(A))^p \leq C(p) \|B-A\|_{\mathcal{S}_p}^p.$$
\end{cor}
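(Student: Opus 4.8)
The plan is to deduce the corollary from Proposition \ref{prop:2} by a direct-sum construction: if a uniform constant $C(p)$ existed in infinite dimensions, it would in particular apply to block-diagonal operators built from the extremal $n\times n$ examples, and letting $n\to\infty$ would force $C(p)\geq n^{1-p}\to\infty$, a contradiction. So the only real work is to carry the optimality half of Proposition \ref{prop:2} into a genuine Hilbert-space statement.

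First I would fix $p\in(0,1)$ and, for each $n\in\mn$, invoke the optimality assertion of Proposition \ref{prop:2} to pick hermitian matrices $A_n,B_n$ on $\mc^n$ for which the ratio
$$ \frac{\sum_{\lambda\in\sigma_d(B_n)}\dist(\lambda,\sigma_d(A_n))^p}{\|B_n-A_n\|_{\mathcal{S}_p}^p} $$
is as close to $n^{1-p}$ as one likes — say at least $n^{1-p}/2$. By rescaling we may assume $\|B_n-A_n\|_{\mathcal{S}_p}^p = 2^{-n}$, so that $\sum_n \|B_n-A_n\|_{\mathcal{S}_p}^p < \infty$. Now set $\hil = \bigoplus_{n\geq 1}\mc^n$ (a separable Hilbert space), $A=\bigoplus_n A_n$ and $B=\bigoplus_n B_n$. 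Then $A,B$ are bounded and selfadjoint provided the operator norms $\|A_n\|,\|B_n\|$ stay bounded — a point I would need to check, but it can be arranged, e.g. by noting $\|B_n-A_n\|\le\|B_n-A_n\|_{\mathcal{S}_p}\to 0$ and, if necessary, adding a scalar shift to keep the $A_n$ within a fixed interval (shifting $A$ and $B$ by the same constant changes neither side of the desired inequality since $\dist(\cdot,\cdot)$ and $\|B-A\|_{\mathcal{S}_p}$ are translation invariant, and one may also shift each block separately). The difference $B-A=\bigoplus_n(B_n-A_n)$ lies in $\mathcal{S}_p$ because $\|B-A\|_{\mathcal{S}_p}^p = \sum_n\|B_n-A_n\|_{\mathcal{S}_p}^p<\infty$.

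Next I would compare spectra. Since $A=\bigoplus_n A_n$ is block diagonal, $\sigma(A)=\overline{\bigcup_n\sigma(A_n)}$, so in particular $\sigma(A)\supseteq\sigma_d(A_n)$ for each fixed $n$, whence $\dist(\lambda,\sigma(A))\leq\dist(\lambda,\sigma_d(A_n))$ for every $\lambda$. Each eigenvalue of $B_n$ is an eigenvalue of $B$ (here I should be mildly careful: eigenvalues of $B$ arising from distinct blocks could coincide, and $\sigma_d(B)$ refers to the discrete spectrum of $B$ with its algebraic multiplicity in $\hil$; but since every point of $\bigcup_n\sigma_d(B_n)$ either is a discrete eigenvalue of $B$ of at least the summed multiplicity, or is absorbed into $\sigma_{ess}(B)$ — in which case I simply drop the finitely many offending blocks, or restrict attention to eigenvalues far enough from the accumulation set — the contribution of block $n$ to the left-hand sum for $B$ is at least what block $n$ contributes on its own). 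Consequently, for any fixed $N$,
$$ \sum_{\lambda\in\sigma_d(B)}\dist(\lambda,\sigma(A))^p \;\geq\; \sum_{\lambda\in\sigma_d(B_N)}\dist(\lambda,\sigma_d(A_N))^p \;\geq\; \frac{N^{1-p}}{2}\,\|B_N-A_N\|_{\mathcal{S}_p}^p, $$
while the right-hand side of the hypothetical inequality is $C(p)\|B-A\|_{\mathcal{S}_p}^p = C(p)\sum_n\|B_n-A_n\|_{\mathcal{S}_p}^p$, which is a fixed finite number independent of $N$. Since $\|B_N-A_N\|_{\mathcal{S}_p}^p=2^{-N}$, this last chain gives no contradiction directly; the cleaner route is instead: for each $N$, \emph{separately} consider the operators $A^{(N)}=A_N\oplus(\text{anything harmless})$ — or most simply, just apply the hypothetical inequality to the \emph{single pair} $A_N,B_N$ regarded inside any $\hil$ (embed $\mc^N$ isometrically), obtaining $\tfrac{N^{1-p}}{2}\|B_N-A_N\|_{\mathcal{S}_p}^p\le C(p)\|B_N-A_N\|_{\mathcal{S}_p}^p$, i.e. $N^{1-p}\le 2C(p)$ for all $N$ — contradiction. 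Thus the direct-sum construction is in fact unnecessary; the corollary follows immediately from the optimality clause of Proposition \ref{prop:2} applied to finite-dimensional $A,B$ (which are, trivially, bounded selfadjoint operators on the separable Hilbert space $\mc^n$, and $\mc^n$ embeds in any infinite-dimensional separable $\hil$ with $\sigma$ and $\sigma_d$ behaving compatibly after padding $A$ by, say, a copy of a fixed operator whose spectrum does not interfere).

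I expect the main obstacle to be the bookkeeping around $\sigma_d$ versus $\sigma$: in the finite-dimensional Proposition the set $\sigma_d(A)$ is literally all of $\sigma(A)$, but the corollary is phrased with $\sigma(A)$, and when one embeds or forms direct sums one must make sure that no eigenvalue of the finite block gets ``cancelled'' by being promoted to essential spectrum of the big operator and that the distances to $\sigma(A)$ are not accidentally made smaller by extra spectrum of $A$ (they can only get smaller, which is the wrong direction). The safe fix, which I would adopt, is simply to note that the bare finite-dimensional statement already provides, for each $n$, a pair of bounded selfadjoint operators on the $n$-dimensional Hilbert space $\mc^n$ violating the inequality with constant $n^{1-p}-\eps$; if a universal $C(p)$ worked for \emph{all} separable $\hil$, it would work for $\hil=\mc^n$ for every $n$, forcing $C(p)\ge n^{1-p}-\eps$ for all $n$ and all $\eps>0$, hence $C(p)=\infty$. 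This makes the direct-sum machinery and the $\sigma$-versus-$\sigma_d$ subtleties moot, and the proof reduces to one sentence once Proposition \ref{prop:2} is in hand.
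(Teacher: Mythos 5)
Your final reduction is exactly the paper's (implicit) argument: the corollary is an immediate consequence of the sharpness clause of Proposition \ref{prop:2}, since $\mc^n$ is itself a complex separable Hilbert space on which $\sigma_d(A)=\sigma(A)$, so a universal $C(p)$ would have to dominate $n^{1-p}-\eps$ for every $n$, which is impossible for $p<1$. You correctly recognized that the weighted direct-sum construction (with norms $2^{-n}$) yields no contradiction and is unnecessary, so the proposal as finally stated is correct and essentially the same as the paper's.
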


\begin{proof}[Proof of Proposition \ref{prop:2}]
Let $p \in (0,1)$. First, we note that $ \|B-A\|_{\mathcal{S}_1} \leq \|B-A\|_{\mathcal{S}_p}$. Moreover,  the $l^1$-norm of a vector $x \in \mr^n$ is bounded from below by $n^{1-1/p}$ times the $l^p$-(quasi-)norm of $x$, as is a consequence of the inverse H\"older inequality. Put together the last two remarks and the $p=1$ case of inequality (\ref{eq:2}) imply the validity of (\ref{eq:31}).

Now let $\{\alpha_k\}_{k=1}^n$ be any sequence of pairwise distinct real numbers. Let $e_1,\ldots,e_n$ denote the standard basis of $\mc^n$ and define the hermitian diagonal matrix $A$ by setting $Ae_k=\alpha_ke_k$ for $k=1,\ldots,n$. Further, define the hermitian diagonal matrix $D$ by setting $De_1=e_1$ and $De_k=0$ for $k=2,\ldots,n$. Finally, let $U=(u_1,\ldots,u_n)$ be any unitary matrix on $\mc^n$, the $u_k$'s denoting its orthonormal column vectors, and for $x > 0$ put $B(x)=A+ x U^*DU$, so that $B(x)$ is hermitian as well.  Since the $e_k$'s are precisely the eigenvectors of $A$ corresponding to the eigenvalues $\alpha_k$, standard perturbation theory (see e.g. \cite{b_Reed78}, Section XII.1) implies that for $x$ small enough there exist $n$ pairwise distinct eigenvalues $\lambda_k(x)$ of $B(x)$ obeying
  \begin{eqnarray*} 
 \lambda_k(x)&=&\alpha_k+ x \langle U^*DUe_k,e_k \rangle + O(x^2) \\
             &=& \alpha_k + x |u_{k,1}|^2 +O(x^2),
  \end{eqnarray*}
where $u_k=(u_{k,1},\ldots, u_{k,n})^T$ and $k=1,\ldots,n.$
Hence, for $x$ small enough
$$ \sum_{\lambda \in \sigma_d(B(x))} \dist(\lambda,\sigma_d(A))^p = x^p \sum_{k=1}^n  |u_{k,1}|^{2p} + O(x^{2p}).$$
Regarding the $\mathcal{S}_p$-norm of $B(x)-A$, we have 
$$ \|B(x)-A\|_{\mathcal{S}_p}^p =x^p \|U^*DU\|_{\mathcal{S}_p}^p=x^p \|D\|_{\mathcal{S}_p}^p=x^p,$$
so for $x \to 0$ 
$$ \frac{ \sum_{\lambda \in \sigma_d(B(x))} \dist(\lambda,\sigma_d(A))^p }{\|B(x)-A\|_{\mathcal{S}_p}^p} \quad {\rightarrow} \quad \sum_{k=1}^n  |u_{k,1}|^{2p}.$$
It remains to note that we can always find orthonormal vectors $u_k$ such that $u_{k,1}=1/\sqrt{n}$ for every $k = 1,\ldots, n$. For instance, we can choose $  u_1 = (1/\sqrt{n}, \sqrt{1-1/n}, 0, \ldots, 0 )^T$ and define recursively
\begin{eqnarray*}
   u_k &=& \left(
   \begin{array}{c}
     u_{k-1,1} \\
     \vdots \\
     u_{k-1,k-1} \\
     - \left( \sum_{j=1}^{k-1} u_{k-1,j}^2 \right)/ u_{k-1,k} \\
     \sqrt{1-\sum_{j=1}^k u_{k,j}^2} \\
     0 \\
     \vdots \\
     0 
   \end{array} \right), \quad k=2,\ldots,n-1
\end{eqnarray*}
and $u_n = \left( u_{n-1,1}, \ldots, u_{n-1,n-1}, - u_{n-1,n} \right)^T$.

\end{proof}

\section*{Acknowledgments}

The author would like to thank M.~Demuth, M.~J.~Gruber and G.~Katriel for their valuable comments.
 
\bibliography{hansmann_bibliography} 

\begin{thebibliography}{10}

\bibitem{MR1477662}
R.~Bhatia.
\newblock {\em Matrix analysis}, volume 169 of {\em Graduate Texts in
  Mathematics}.
\newblock Springer-Verlag, New York, 1997.

\bibitem{MR2325304}
R.~Bhatia.
\newblock {\em Perturbation bounds for matrix eigenvalues}, volume~53 of {\em
  Classics in Applied Mathematics}.
\newblock Society for Industrial and Applied Mathematics (SIAM), Philadelphia,
  PA, 2007.
\newblock Reprint of the 1987 original.

\bibitem{Bhatia99}
R.~Bhatia and Ch. Davis.
\newblock Perturbation of extended enumerations of eigenvalues.
\newblock {\em Acta Sci. Math. (Szeged)}, 65(1-2):277--286, 1999.

\bibitem{MR948353}
R.~Bhatia and K.~B. Sinha.
\newblock A unitary analogue of {K}ato's theorem on variation of discrete
  spectra.
\newblock {\em Lett. Math. Phys.}, 15(3):201--204, 1988.

\bibitem{Borichev08}
A.~Borichev, S.~Golinskii, and S.~Kupin.
\newblock A {B}laschke-type condition and its application to complex {J}acobi
  matrices.
\newblock {\em Bull. London Math. Soc.}, 41:117--123, 2009.

\bibitem{Bouldin80}
R.~Bouldin.
\newblock Best approximation of a normal operator in the {S}chatten {$p$}-norm.
\newblock {\em Proc. Amer. Math. Soc.}, 80(2):277--282, 1980.

\bibitem{MR2455843}
V.~Bruneau and E.~M. Ouhabaz.
\newblock Lieb-{T}hirring estimates for non-self-adjoint {S}chr\"odinger
  operators.
\newblock {\em J. Math. Phys.}, 49(9):093504, 10, 2008.

\bibitem{DHK08_2}
M.~Demuth, M.~Hansmann, and G.~Katriel.
\newblock On the discrete spectrum of non-selfadjoint operators.
\newblock {\em J. Funct. Anal.}, 257:2742--2759, 2009.

\bibitem{Demuth08}
M.~Demuth and G.~Katriel.
\newblock Eigenvalue inequalities in terms of {S}chatten norm bounds on
  differences of semigroups, and application to {S}chr\"odinger operators.
\newblock {\em Ann. Henri Poincar\'e}, 9(4):817--834, 2008.

\bibitem{b_Engel00}
K.-J. Engel and R.~Nagel.
\newblock {\em One-parameter semigroups for linear evolution equations}.
\newblock Springer-Verlag, New York, 2000.

\bibitem{Frank06}
R.~L. Frank, A.~Laptev, E.~H. Lieb, and R.~Seiringer.
\newblock Lieb-{T}hirring inequalities for {S}chr\"odinger operators with
  complex-valued potentials.
\newblock {\em Lett. Math. Phys.}, 77(3):309--316, 2006.

\bibitem{b_Gohberg69}
I.~C. Gohberg and M.~G. Krein.
\newblock {\em Introduction to the theory of linear nonselfadjoint operators}.
\newblock American Mathematical Society, Providence, R.I., 1969.

\bibitem{Golinskii07}
L.~Golinskii and S.~Kupin.
\newblock Lieb-{T}hirring bounds for complex {J}acobi matrices.
\newblock {\em Lett. Math. Phys.}, 82(1):79--90, 2007.

\bibitem{b_Gustafson97}
K.~E. Gustafson and D.~K.~M. Rao.
\newblock {\em Numerical range}.
\newblock Universitext. Springer-Verlag, New York, 1997.

\bibitem{MR1072339}
U.~Haagerup and P.~de~la Harpe.
\newblock The numerical radius of a nilpotent operator on a {H}ilbert space.
\newblock {\em Proc. Amer. Math. Soc.}, 115(2):371--379, 1992.

\bibitem{Hansmann08}
M.~Hansmann.
\newblock Estimating eigenvalue moments via {S}chatten norm bounds on semigroup
  differences.
\newblock {\em Math. Phys. Anal. Geom.}, 10(3):261--270, 2007.

\bibitem{HK09}
M.~Hansmann and G.~Katriel.
\newblock Inequalities for the eigenvalues of non-selfadjoint {J}acobi
  operators.
\newblock {\em Complex Anal. Oper. Theory}, to appear. DOI
  10.1007/s11785-009-0040-2.

\bibitem{Hundertmark02}
D.~Hundertmark and B.~Simon.
\newblock Lieb-{T}hirring inequalities for {J}acobi matrices.
\newblock {\em J. Approx. Theory}, 118(1):106--130, 2002.

\bibitem{Kato87}
T.~Kato.
\newblock Variation of discrete spectra.
\newblock {\em Comm. Math. Phys.}, 111(3):501--504, 1987.

\bibitem{MR1335452}
T.~Kato.
\newblock {\em Perturbation theory for linear operators}.
\newblock Classics in Mathematics. Springer-Verlag, Berlin, 1995.
\newblock Reprint of the 1980 edition.

\bibitem{MR2540070}
A.~Laptev and O.~Safronov.
\newblock Eigenvalue estimates for {S}chr\"odinger operators with complex
  potentials.
\newblock {\em Comm. Math. Phys.}, 292(1):29--54, 2009.

\bibitem{b_Reed78}
M.~Reed and B.~Simon.
\newblock {\em Methods of modern mathematical physics. {IV}. {A}nalysis of
  operators}.
\newblock Academic Press [Harcourt Brace Jovanovich Publishers], New York,
  1978.

\bibitem{Saf09b}
O.~Safronov.
\newblock On a sum rule for {S}chr\"odinger operators with complex potentials.
\newblock {\em Proc. Amer. Math. Soc.}, 138:2107--2112, 2010.

\bibitem{b_Simon05}
B.~Simon.
\newblock {\em Trace ideals and their applications}.
\newblock American Mathematical Society, Providence, RI, second edition, 2005.

\end{thebibliography}
\bibliographystyle{plain} 

\end{document}